\newtheorem{theo}{Theorem}[section]
\newtheorem{cor}[theo]{Corollary }
\newtheorem{lemm}[theo]{Lemma}
\newtheorem{fact}[theo]{Fact}
\newtheorem{defi}{Definition}[section]%[theo]{Definition}
\newtheorem{prop}[theo]{Proposition}
\newtheorem{Remark}[theo]{Remark}
\def\R{\mathbb{R}}
\def\SO{{\sf{SO}}}
\def\SL{{\sf{SL}}}
\title[]{Pseudo-Conformal actions  of the Möbius group}
\date{\today}
\author{M. Belraouti}
\address{Mehdi Belraouti \newline
Faculté de Mathématiques,\\
USTHB, BP 32, El-Alia,\\
16111 Bab-Ezzouar, Alger (Algeria)}
\email{mbelraouti@usthb.dz}
\author{M. Deffaf}
\address{Mohamed Deffaf \newline
Faculté de Mathématiques,\\
USTHB, BP 32, El-Alia,\\
16111 Bab-Ezzouar, Alger (Algeria)}
\email{mdeffaf@usthb.dz}
\author{Y. Raffed} 
\address{Yazid raffed \newline
Faculté de Mathématiques,\\
USTHB, BP 32, El-Alia,\\
16111 Bab-Ezzouar, Alger (Algeria) }
\email{yazidsaid.raffed@usthb.edu.dz}
\author{A. Zeghib}
\address{Abdelghani Zeghib \newline
UMPA, ENS de Lyon, France }
\email{abdelghani.zeghib@ens-lyon.fr}
\begin{document}
\maketitle

\noindent{\bf Abstract.}
We study compact connected pseudo-Riemannian manifolds $(M,g)$ on which the conformal group $\operatorname{Conf}(M,g)$ acts essentially and transitively. We prove, in particular,  that if the non-compact semi-simple part of $\operatorname{Conf}(M,g)$ is the Möbius group, then $(M,g)$ is conformally flat.

\tableofcontents

%$\SO(1, n)$, $\Eins$, $\SL(n, \R)$

\section{Introduction}
A pseudo-Riemannian manifold is a differentiable manifold $M$ endowed with a pseudo-Riemannian metric $g$ of signature $(p,q)$. Two %pseudo-Riemannian
 metrics $g_{1}$ and $g_{2}$ on $M$ are said to be conformally equivalent if and only if $g_{1}=\exp(f)g_{2}$ where $f$ is $C^{\infty}$ function. A conformal structure is then %on $M$ is
an equivalence class $[g]$ of a pseudo-Riemannian metric $g$  and a conformal manifold is a manifold endowed with a pseudo-Riemannian conformal structure. A remarkable family of  conformal manifolds is given by the conformally flat ones. These are pseudo-Riemannian conformal manifolds that are locally conformally diffeomorphic (i.e preserving the conformal structures) to  the Minkowski space $\mathbb{R}^{p,q}$ i.e the vector space $\mathbb{R}^{p+q}$ endowed with the pseudo-Riemannian metric $-dx_{0}^{2}-...-dx_{p-1}^{2}+dy_{0}^{2}+...+dy_{q-1}^{2}$.

The conformal group $\operatorname{Conf}(M,g)$ is the group of transformations that preserve the conformal structure $[g]$. It is said to be essential if there is no metric in the conformal class of $g$  for which it acts isometrically. In the Riemannian case, the sphere $\mathbb{S}^{n}$ is a compact conformally flat manifold with an essential conformal group. 

The Einstein universe $\operatorname{Ein}^{p,q}$ is the equivalent model of the standard conformal sphere in the pseudo-Riemannian setting. It admits a two-fold covering conformally equivalent to the product $\mathbb{S}^{p}\times \mathbb{S}^{q}$ endowed with the conformal class of $-g_{\mathbb{S}^{p}}\oplus g_{\mathbb{S}^{q}}$. It is conformally flat and its conformal group, which is in fact the pseudo-Riemannian Möbius group $\operatorname{O}(p+1,q+1)$, is essential. Actually the Einstein universe is the flat model of conformal pseudo-Riemannian geometry. This is essentially due to the fact that the Minkowski space embeds conformally as a dense open subset of the Einstein universe $\operatorname{Ein}^{p,q}$ and in addition to the Liouville theorem  asserting that conformal local diffeomorphisms on $\operatorname{Ein}^{p,q}$ are unique restrictions of elements of $\operatorname{O}(p+1,q+1)$. Hence a manifold is conformally flat if and only if it admits a $\left(\operatorname{O}(p+1,q+1),\operatorname{Ein}^{p,q}\right)$-structure.

In the sixties A. Lichn\'erowicz conjectured that among compact Riemannian manifolds, the sphere is the only essential conformal structure. This was generalised and  proved independently by Obatta and Ferrand  (see \cite{Obata}, \cite{Ferrand}). In the pseudo-Riemannian case, a similar question, called the pseudo-Riemannian Lichn\'erowicz conjecture, was raised by  D'Ambra and Gromov \cite{Gromov}. Namely, if a compact pseudo-Riemannian conformal manifold is essential then it is conformally flat. This was disproved by Frances  see \cite{Francesun}, \cite{frances2}.  

The present article is the first of a series on  the pseudo-Riemannian Lichn\'erowicz conjecture in a homogeneous setting \cite{BDRZdeux, BDRZ}. The general non homogeneous case, 
but with signature 
restrictions,  was amply   studied by  Zimmer, Bader, Nevo,  Frances, Zeghib,  Melnick and Pecastaing (see \cite{Zimmerun}, \cite{Bader}, \cite{Francesquatre}, \cite{Pecastaingun}, \cite{Pecastaingdeux}, \cite{Vincenttrois}, \cite{Vincentquatre}). Let us also quote \cite{leistner2023conformal} as a recent work in the Lorentz case.

We are investigating  in this first part  the case where the non-compact semi-simple part of the conformal group is locally isomorphic to the Möbius group $\operatorname{SO}(1,n+1)$.  
More  exactly, we prove the following  classification result. This Möbius situation   will actually play a central role   towards the general case treated in  \cite{BDRZdeux}.

\begin{theo}
\label{theoremdeux}
Let $(M,[g])$ be a conformal connected compact pseudo-Riemannian manifold. We suppose that there exists $G$ a subgroup of  the conformal group $\operatorname{Conf}(M,g)$ acting  essentially and transitively on $(M,[g])$. We suppose moreover that the non-compact semi-simple part of  $G$  is locally isomorphic to the Möbius group $\operatorname{SO}(1,n+1)$. Then $(M,[g])$ is conformally flat.  More precisely $(M,[g])$ is conformally equivalent to 
\begin{itemize}
\item The conformal Riemannian $n-$sphere or; 
\item Up to a cover, the Einstein universe $\operatorname{Ein}^{1,1}$ or;
\item Up to a finite cover, the Einstein universe $\operatorname{Ein}^{3,3}$.
\end{itemize}
\end{theo}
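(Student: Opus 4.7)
The strategy is to reduce the problem to the conformal dynamics of the non-compact simple factor $S$ (locally isomorphic to $\operatorname{SO}(1,n+1)$), to establish conformal flatness by exploiting its rank-one structure, and finally to classify the resulting transitive $(\operatorname{O}(p+1,q+1),\operatorname{Ein}^{p,q})$-models. Writing $M=G/H$ with $G$ having Levi decomposition $G=(S\cdot K)\ltimes R$, where $K$ is compact semi-simple and $R$ is the solvable radical, the first step is to argue that it is the action of $S$ that carries the essentiality: the compact factor $K$ preserves some metric in the conformal class by averaging, and $R$ alone cannot act essentially on a compact conformal manifold. Hence the essentiality of $G$ must come from $S$.

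The second step is to prove conformal flatness through the rank-one dynamics of $S$. I would choose an Iwasawa decomposition $S=\operatorname{SO}(n+1)\cdot A\cdot N$ with $A\cong\mathbb{R}$, and consider a generator $a_t$ of $A$. Essentiality of $S$ provides, at some point of $M$, a blown-up conformal holonomy with non-trivial linear part of unbounded conformal distortion. Following the philosophy developed by Frances, Melnick, and Pecastaing in the works cited in the introduction, such holonomy forces the Weyl tensor (and, in low dimension, the Cotton--York tensor) to vanish at that point. Propagating this vanishing via the homogeneous $G$-action yields conformal flatness of the whole $M$, so that $(M,[g])$ inherits a $(\operatorname{O}(p+1,q+1),\operatorname{Ein}^{p,q})$-structure by Liouville's theorem.

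With conformal flatness in hand, the developing map realizes $G$ (up to covers) as a closed subgroup of $\operatorname{O}(p+1,q+1)$ containing $S$ and acting transitively on a compact quotient of a conformal open subset of $\operatorname{Ein}^{p,q}$. The problem then reduces to a Lie-theoretic question: list the embeddings $\so(1,n+1)\hookrightarrow\so(p+1,q+1)$ for which there is a subgroup $G$ with the required Levi shape acting transitively and cocompactly on an open $\operatorname{O}(p+1,q+1)$-invariant set of $\operatorname{Ein}^{p,q}$. Dimension and rank-one constraints eliminate most signatures: the Riemannian signature gives $(p,q)=(0,n)$ and $M=\Sphere^n$; the Lorentz signature collapses to $(p,q)=(1,1)$, where $M$ is a cover of $\operatorname{Ein}^{1,1}$ and $S$ acts on one of the two circle factors of the double cover; higher signatures are ruled out except for the exceptional occurrence, via a triality isogeny inside $\so(4,4)$, giving $M$ a finite cover of $\operatorname{Ein}^{3,3}$.

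\textbf{Main obstacle.} The hardest step is this last one: the exceptional $\operatorname{Ein}^{3,3}$ case. Ruling out all other homogeneous conformally flat pseudo-Riemannian candidates forces an explicit enumeration of the rank-one subalgebras of the simple conformal Lie algebras $\so(p+1,q+1)$ that admit a transitive conformal action with cocompact stabilizer. This is only tractable by combining parabolic and root-system analysis with the specific low-dimensional isomorphisms and the triality present in $D_4$, and is the point where the qualitative ingredients (essentiality plus rank one) no longer suffice and a case-by-case Lie-theoretic analysis becomes unavoidable.
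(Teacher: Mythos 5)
Your outline diverges from the paper's proof in a way that leaves the central step unjustified. The paper does not deduce conformal flatness from general conformal dynamics; it works entirely at the Lie-algebra level. Using Borel--Tits it shows that the (algebraic hull of the) isotropy algebra contains $(\a\oplus\s_{+})\ltimes\n$, introduces the degenerate form $\left\langle X,Y\right\rangle=g(X^{*}(x),Y^{*}(x))$ on $\g$ with kernel $\h$ together with a distortion character $\delta$ on $\a$, and from the pairing relation $\alpha+\beta=\delta$ between weight spaces it forces $\h\supset\a\oplus\s_{\alpha}\oplus\bigoplus_{\beta\neq 0}\n_{\beta}$, determines the signature to be $(n,n)$ in the main case, and then kills the Weyl tensor by a direct algebraic computation using the partition of $\g/\h$ by the sets $\overline{\Theta(Z_{x})}$. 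Only in one degenerate sub-case ($\g_{0}\subset\h$) does it invoke a Frances--Melnick normal-form theorem, and there it has an explicit strongly essential element ($e^{X}$ with trivial differential on $\g/\h$) to feed into that theorem.

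The genuine gap in your proposal is the second step. Essentiality, even together with unbounded conformal distortion along a one-parameter subgroup of $A$, does not by itself force the Weyl (or Cotton) tensor to vanish: that is precisely the content of the pseudo-Riemannian Lichn\'erowicz conjecture, which Frances disproved in general. The dynamical results of Frances, Melnick and Pecastaing that you appeal to carry additional hypotheses (specific signature restrictions, or the existence of a conformal transformation fixing a point with unipotent or trivial $1$-jet), and none of them applies off the shelf in the signature $(n,n)$ that actually occurs here. So "blown-up holonomy $\Rightarrow$ Weyl $=0$" is an assertion, not an argument, and it is exactly the point where the paper has to do real work (Propositions on $Z_{x}$ and the three Weyl-tensor identities). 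Your first step is also shakier than you suggest: the claim that essentiality must be carried by $S$ is neither proved nor needed in the paper, which instead shows only that $S_{nc}$ is non-trivial (via amenability and the non-existence of an invariant volume form) and then keeps the full group $G$ in play; indeed in the $\Ein^{1,1}$ case the radical genuinely contributes to the transitive action. Your final classification step is closest in spirit to the paper's (which pins down $n=3$ by Montgomery's theorem and the non-simplicity of $\SO(4)$ acting on $\Sphere^{3}\times\Sphere^{3}$), but as written it rests on the unestablished flatness.
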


\begin{Remark} It turns out that, in the first and third cases,  the acting group $G$ is locally isomorphic to the Môbius group, that is,  $G$ 
is simple. In the second case, the universal cover $\tilde{G}$ is a subgroup  of 
$\widetilde{\SL(2, \R)} \times \widetilde{\SL(2, \R)}$. It can in particular be 
$\widetilde{\SL(2, \R)} \times \widetilde{\SO(2)}$.

\end{Remark}

\section{Preliminaries}

\subsection{Notations}
Throughout this paper $(M,g)$  will be a compact connected pseudo-Riemannian manifold of dimension $n$ endowed with a transitive and essential action of the conformal group $G=\operatorname{Conf}(M,g)$. We  suppose without loss of generality that $G$ is connected.

Fix a point $x$ in $M$ and denote by $H=\operatorname{Stab}(x)$ its stabilizer in $G$. Denote respectively by $\mathfrak{g}$, $\mathfrak{h}$ the Lie algebras of $G$ and $H$. Let $\mathfrak{g}=\mathfrak{s}\ltimes \mathfrak{r}$ be a Levi decomposition of $\mathfrak{g}$, where $\mathfrak{s}$ is semi-simple and $\mathfrak{r}$ is the solvable radical of $\mathfrak{g}$. Denote by $\mathfrak{s}_{nc}$  the non-compact semi-simple factor of $\mathfrak{s}$, by $\mathfrak{s}_{c}$  the compact one and %We have then $G=\left(S_{nc}\times S_{c}\right)\ltimes R$. 
let $\mathfrak{n}$ be the nilpotent radical of $\mathfrak{g}$. Note that $\mathfrak{n}$ is an ideal of $\mathfrak{g}$. Let us denote respectively by  $S$, $S_{nc}$, $S_{c}$, $R$ and $N$ the connected Lie sub-groups of $G$ associated to $\mathfrak{s}$, $\mathfrak{s}_{nc}$, $\mathfrak{s}_{c}$, $\mathfrak{r}$ and $\mathfrak{n}$.

Let $\mathfrak{a}$ be a Cartan subalgebra of $\mathfrak{s}$ associated  with a Cartan involution $\Theta$. Consider $\mathfrak{s}=\mathfrak{s}_{0} \oplus  \bigoplus_{\alpha \in \Delta} \mathfrak{s}_{\alpha}=   \mathfrak{a}\oplus \mathfrak{m}\oplus  \bigoplus_{\alpha \in \Delta} \mathfrak{s}_{\alpha}$ the  root space decomposition  of $\mathfrak{s}$, where $\Delta$ is the set of roots of $(\mathfrak{s},\mathfrak{a})$. Denote respectively by  $\Delta^{+}$, $\Delta^{-}$ the set of  positive and negative roots of $\mathfrak{s}$ for some chosen notion of positivity on $\mathfrak{a}^{*}$. Then $\mathfrak{s}=  \mathfrak{s}_{-}\oplus\mathfrak{a}\oplus \mathfrak{m} \oplus \mathfrak{s}_{+}$, where $\mathfrak{s}_{+}=\bigoplus_{\alpha\in \Delta^{+}} \mathfrak{s}_{\alpha}$ and $\mathfrak{s}_{-}=\bigoplus_{\alpha\in \Delta^{-}} \mathfrak{s}_{\alpha}$.

For every  $\alpha \in \mathfrak{a}^{*}$, consider
$$\mathfrak{g}_{\alpha} = \{ X\in \mathfrak{g} , \forall H\in \mathfrak{a} : ad_{H}(X)=\alpha(H)X \}.$$
We say that $\alpha$ is a weight if $\mathfrak{g}_{\alpha}\neq 0$. In this case $\mathfrak{g}_{\alpha}$ is its associated weight space. %We have
As $\left[\mathfrak{g},\mathfrak{r}\right]\subset \mathfrak{n}$ (see \cite[Theorem~13]{J}) then, for every $\alpha\neq 0$,  $\mathfrak{g}_{\alpha}=\mathfrak{s}_{\alpha}\oplus\mathfrak{n}_{\alpha}$, where $$\mathfrak{n}_{\alpha} = \{ X\in \mathfrak{n} , \forall H\in \mathfrak{a} : ad_{H}(X)=\alpha(H)X \}.$$ %where $\mathfrak{n}$ is the  nilpotent radical of $\mathfrak{g}$ 
Moreover, the commutativity of $\mathfrak{a}$ together with the fact that finite dimensional representations of a semi-simple Lie algebra preserve the Jordan decomposition implies that elements of $\mathfrak{a}$ are simultaneously diagonalisable in some basis of $\mathfrak{g}$. Thus $\mathfrak{g}=\mathfrak{g}_{0} \oplus  \bigoplus_{\alpha \neq 0} \mathfrak{g}_{\alpha}$. 

Finally we will denote respectively by $A$, $S_{+}$  the connected Lie subgroups of $G$ corresponding to $\mathfrak{a}$ and $\mathfrak{s}_{+}$.

\subsection{General facts}
We will prove some general results about the conformal group $G$. We start with the following general fact:
\begin{prop}
\label{propo00}
We have that $\left[\mathfrak{s}, \mathfrak{n}\right]=\left[\mathfrak{s}, \mathfrak{r}\right]$. In particular the sub-algebra $\mathfrak{s}\ltimes \mathfrak{n}$ is an ideal of $\mathfrak{g}$. 
\end{prop}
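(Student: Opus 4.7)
The plan is to dispatch first the easy inclusion $[\mathfrak{s}, \mathfrak{n}] \subset [\mathfrak{s}, \mathfrak{r}]$, which is immediate from $\mathfrak{n} \subset \mathfrak{r}$. The cited identity $[\mathfrak{g}, \mathfrak{r}] \subset \mathfrak{n}$ gives in particular $[\mathfrak{s}, \mathfrak{r}] \subset \mathfrak{n}$, so both sides of the sought equality already live inside $\mathfrak{n}$ and the whole content is the reverse inclusion $[\mathfrak{s}, \mathfrak{r}] \subset [\mathfrak{s}, \mathfrak{n}]$. To obtain it I would invoke Weyl's complete reducibility theorem for the adjoint action of the semi-simple Lie algebra $\mathfrak{s}$ on the finite-dimensional vector space $\mathfrak{r}$.

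Because $\mathfrak{n}$ is an ideal of $\mathfrak{g}$, it is in particular $\mathfrak{s}$-invariant, so complete reducibility furnishes an $\mathfrak{s}$-invariant complement $C$ with $\mathfrak{r} = \mathfrak{n} \oplus C$ as $\mathfrak{s}$-modules. Then $[\mathfrak{s}, C] \subset C$ by invariance, while simultaneously $[\mathfrak{s}, C] \subset [\mathfrak{s}, \mathfrak{r}] \subset \mathfrak{n}$. These two inclusions force $[\mathfrak{s}, C] \subset C \cap \mathfrak{n} = 0$, that is, $\mathfrak{s}$ centralizes $C$. Consequently $[\mathfrak{s}, \mathfrak{r}] = [\mathfrak{s}, \mathfrak{n}] + [\mathfrak{s}, C] = [\mathfrak{s}, \mathfrak{n}]$, which is the first assertion.

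For the second assertion, $\mathfrak{s} \ltimes \mathfrak{n}$ is already a subalgebra because $[\mathfrak{s}, \mathfrak{n}] \subset \mathfrak{n}$ and $\mathfrak{n}$ is itself a Lie subalgebra. To verify that it is an ideal of $\mathfrak{g}$, I would use the Levi splitting $\mathfrak{g} = \mathfrak{s} \oplus \mathfrak{r}$ as vector spaces and expand an arbitrary bracket $[\mathfrak{s} + \mathfrak{r},\, \mathfrak{s} + \mathfrak{n}]$ into four pieces: $[\mathfrak{s}, \mathfrak{s}] \subset \mathfrak{s}$, $[\mathfrak{s}, \mathfrak{n}] \subset \mathfrak{n}$, and $[\mathfrak{r}, \mathfrak{n}] \subset \mathfrak{n}$ since $\mathfrak{n}$ is an ideal in $\mathfrak{g}$; the only delicate piece $[\mathfrak{r}, \mathfrak{s}] = [\mathfrak{s}, \mathfrak{r}]$ is contained in $[\mathfrak{s}, \mathfrak{n}] \subset \mathfrak{n}$ precisely by the first part of the proposition. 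All four pieces land in $\mathfrak{s} + \mathfrak{n}$, which gives the $\ad(\mathfrak{g})$-stability.

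There is no real obstacle here: the only step carrying mathematical content is the complete reducibility argument extracting the $\mathfrak{s}$-invariant complement $C$ of $\mathfrak{n}$ in $\mathfrak{r}$; the remainder is bookkeeping with the standard inclusion $[\mathfrak{g}, \mathfrak{r}] \subset \mathfrak{n}$ quoted from \cite{J} and the ideal property of the nilpotent radical.
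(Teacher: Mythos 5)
Your proof is correct and follows essentially the same route as the paper: both invoke complete reducibility of the adjoint $\mathfrak{s}$-action on $\mathfrak{r}$ to produce an $\mathfrak{s}$-invariant complement of $\mathfrak{n}$, then use $[\mathfrak{g},\mathfrak{r}]\subset\mathfrak{n}$ to see that $\mathfrak{s}$ acts trivially on that complement, giving $[\mathfrak{s},\mathfrak{r}]=[\mathfrak{s},\mathfrak{n}]$ and hence the ideal property. Your write-up merely makes explicit the bracket bookkeeping that the paper leaves implicit.
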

\begin{proof}
For this, let us consider the semi-simple  $S-$representation in $GL(\mathfrak{r})$. It preserves $\mathfrak{n}$ and thus has a supplementary invariant subspace. But $\left[\mathfrak{g},\mathfrak{r}\right]\subset \mathfrak{n}$ so automorphisms of $\mathfrak{r}$ act trivially on $\mathfrak{r}/\mathfrak{n}$ and hence $[\mathfrak{s},\mathfrak{g}]\subset\mathfrak{s}\oplus[\mathfrak{s},\mathfrak{n}] \subset\mathfrak{s}\ltimes \mathfrak{n}$. We deduce that $\mathfrak{s}\ltimes \mathfrak{n}$ is an ideal of $\mathfrak{g}$.
\end{proof}

Next we will prove:
\begin{prop}
\label{Porpositionune}
The non-compact semi-simple factor $S_{nc}$ of $S$ is non trivial.
\end{prop}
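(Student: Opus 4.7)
I would argue by contradiction. Suppose $\mathfrak{s}_{nc}=0$, so that $\mathfrak{g}=\mathfrak{s}_{c}\ltimes\mathfrak{r}$ with $\mathfrak{s}_{c}$ compact semi-simple and $\mathfrak{r}$ solvable; in particular $G$ is amenable (an extension of an amenable group by an amenable one). The strategy is to exploit this amenability to manufacture a $G$-invariant metric in the conformal class $[g]$, thereby contradicting the assumed essentiality of the action.

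\textbf{Step 1 (invariant smooth volume).} Amenability of $G$ together with compactness of $M$ yields a $G$-invariant Borel probability measure $\mu$ on $M$. By transitivity, $M=G/H$, and by Weil's classical theorem a non-zero $G$-invariant Radon measure on $G/H$ exists precisely when the modular characters satisfy $\Delta_{G}|_{H}=\Delta_{H}$, in which case it is unique up to scale and is smooth. The mere existence of $\mu$ forces this modular identity, hence $\mu$ is in fact a smooth positive $G$-invariant volume form $\omega$ on $M$.

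\textbf{Step 2 (invariant metric).} Fix any $g_{0}\in[g]$ and write the conformal cocycle as $\phi^{*}g_{0}=\lambda_{\phi}^{2}\,g_{0}$ with $\lambda_{\phi}\in C^{\infty}(M,\mathbb{R}_{>0})$; consequently $\phi^{*}d\mathrm{vol}_{g_{0}}=\lambda_{\phi}^{n}\,d\mathrm{vol}_{g_{0}}$. Writing $\omega=h\cdot d\mathrm{vol}_{g_{0}}$ with $h>0$ smooth, the invariance $\phi^{*}\omega=\omega$ becomes the identity
$$(h\circ\phi)\,\lambda_{\phi}^{n}=h.$$
Setting $g_{1}:=h^{2/n}g_{0}\in[g]$, a direct computation
$$\phi^{*}g_{1}=(h\circ\phi)^{2/n}\lambda_{\phi}^{2}\,g_{0}=\bigl((h\circ\phi)\lambda_{\phi}^{n}\bigr)^{2/n}g_{0}=h^{2/n}g_{0}=g_{1}$$
shows that $g_{1}$ is $G$-invariant, contradicting essentiality.

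\textbf{Expected main obstacle.} The only delicate point is the regularity claim in Step 1: abstract amenability produces only a Borel invariant probability measure, which a priori could be singular with respect to the smooth structure. It is the transitivity/homogeneity of the $G$-action that rigidifies the space of invariant measures on $G/H$ and forces the amenable mean to coincide, up to scale, with the (then necessarily existing) smooth invariant volume supplied by the Weil construction. Once this regularity is in hand, the passage from invariant volume to invariant metric in Step 2 is a routine manipulation of the conformal cocycle.
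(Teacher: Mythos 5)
Your proposal is correct and follows essentially the same route as the paper: amenability of $G$ (when $\mathfrak{s}_{nc}=0$) yields an invariant measure on the compact homogeneous space $M=G/H$, homogeneous-space measure theory upgrades it to a smooth invariant volume form, and the conformal-cocycle computation $g_{1}=h^{2/n}g_{0}$ then produces an invariant metric contradicting essentiality. The only cosmetic difference is that you justify the regularity step via Weil's existence--uniqueness criterion for invariant measures on $G/H$, whereas the paper invokes the equivalence of quasi-invariant measures and the smoothness of the associated rho-function; both are the same underlying rigidity statement.
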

Let us first start with the following simple observation:
\begin{prop}
If a conformal diffeomorphism $f$ of $(M,g)$ preserves a volume form $\omega$ on $M$, then it preserves a metric in the conformal class of $g$.
\end{prop}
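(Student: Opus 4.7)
The plan is to exhibit explicitly a metric $\tilde g$ in the conformal class of $g$ whose volume form is $\omega$, and then to use conformality of $f$ to conclude that $f$ must act as an isometry of $\tilde g$. The key elementary remark is that if one rescales a pseudo-Riemannian metric by a positive conformal factor $c^{2}$, its volume form gets rescaled by $c^{n}$ where $n=\dim M$; consequently there is at most one (up to sign) metric in a given conformal class with a prescribed volume form.

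First, I would fix a volume form $\omega_{g}$ attached to $g$ (the existence of $\omega$ forces $M$ to be orientable, so $\omega_{g}$ exists). Since $M$ is connected, the two nowhere-vanishing top-degree forms $\omega$ and $\omega_{g}$ differ by a smooth function $h\colon M\to\R$ of constant sign; up to changing the sign of $\omega_{g}$ I may assume $h>0$, so $\omega=h\,\omega_{g}$. I would then set
\[
\tilde g := h^{2/n}\, g,
\]
which lies in the conformal class of $g$ and, by the preceding remark, has volume form equal to $\omega$.

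To conclude, I would exploit the fact that $f$ remains conformal for $\tilde g$, so $f^{*}\tilde g = e^{\psi}\,\tilde g$ for some smooth function $\psi$. Taking volume forms yields $f^{*}\omega = e^{n\psi/2}\,\omega$, and the hypothesis $f^{*}\omega=\omega$ then forces $\psi\equiv 0$; hence $f^{*}\tilde g=\tilde g$, as required.

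There is no real obstacle here: the statement amounts to the elementary fact that a conformal map preserving a volume form must be an isometry for the unique metric in the conformal class realising that volume form. The only point worth flagging is the need to compare $\omega$ and $\omega_{g}$ by a smooth function of constant sign, which is immediate from connectedness of $M$.
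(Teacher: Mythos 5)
Your proof is correct and follows essentially the same route as the paper: rescale $g$ by the $2/n$-th power of the ratio $\omega/\omega_{g}$ to produce the metric in the conformal class whose volume form is $\omega$, then compare $f^{*}\omega$ with $\omega$ to force the conformal factor to be $1$. The only (harmless) difference is that you explicitly handle the sign of $\omega/\omega_{g}$, whereas the paper writes the ratio directly as $e^{\phi}$.
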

\begin{proof}
Let $f$ be a diffeomorphism preserving the conformal class $\left[g\right]$ and a volume form $\omega$ on $M$. Denote by $\omega_{g}$  the volume form defined on $M$ by the metric $g$. On the one hand, there exists a $C^{\infty}$ real function $\phi$ such that $\omega=e^{\phi}\omega_{g}$. Hence $\omega$ is the volume form defined by  the metric $e^{\frac{2\phi}{n}}g$. On the other hand, we have $f^{*}e^{\frac{2\phi}{n}}g=e^{\psi }e^{\frac{2\phi}{n}}g$, for some $C^{\infty}$ function $\psi$. Thus  $f^{*}\omega=e^{\frac{n}{2}\psi}\omega$. But, $f$ preserves the volume form $\omega$, so $\psi=0$ which means that $f$ preserves the metric $e^{\frac{2\phi}{n}}g$. 
\end{proof}
As a consequence we get:
\begin{cor}
\label{cor80}
The conformal group $G$   preserves no volume form on $M$.
\end{cor}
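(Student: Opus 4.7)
My plan is to argue by contradiction, directly applying the preceding proposition. Suppose toward a contradiction that $G$ preserves some volume form $\omega$ on $M$. Writing $\omega = e^{\phi}\omega_{g}$ as in the proof of the previous proposition, I would observe that the metric $\tilde{g}=e^{2\phi/n}g$ constructed there depends only on the pair $(\omega, g)$, not on any particular conformal diffeomorphism. Hence, by the proposition, every element $f\in G$ — which by hypothesis preserves both $[g]$ and the single volume form $\omega$ — preserves this one and the same metric $\tilde{g}\in [g]$.

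Consequently the whole group $G$ acts by isometries on $(M,\tilde{g})$, contradicting the standing hypothesis that the $G$-action on $[g]$ is essential. The corollary follows.

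There is no genuine obstacle here: the argument is essentially a one-line deduction from the previous proposition. The only point worth underlining is the uniformity of the conformal factor $e^{2\phi/n}$ across all elements of $G$, so that the pointwise-preserved-metric statement of the proposition upgrades automatically to a group-invariant metric. It is precisely this uniformity — and the compactness of $M$, which guarantees that $\omega_g$ and $\omega$ are globally defined with a smooth ratio $e^{\phi}$ — that is being used.
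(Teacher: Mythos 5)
Your argument is correct and coincides with the paper's (implicit) deduction: the corollary is obtained from the preceding proposition precisely because the metric $e^{2\phi/n}g$ is built from $(\omega,g)$ alone, so a $G$-invariant volume form would produce a single $G$-invariant metric in $[g]$, contradicting essentiality. Your emphasis on the uniformity of the conformal factor is exactly the point that makes the upgrade from a single $f$ to the whole group work.
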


Assume that the non-compact semi-simple factor $S_{nc}$ is trivial. Then by \cite[Corollary~4.1.7]{Zimme} the  group $G$ is amenable. So it preserves a regular Borel measure $\mu$ on the compact manifold $M$. It is in particular a quasi-invariant measure with associated rho-function $\rho_{1}=1$ (in the sense of \cite{BHV}). Let now $\omega_{g}$ be the volume form corresponding to the metric $g$. As the group $G$ acts conformally and the action is $C^{\infty}$, the measure $\omega_{g}$ is also  quasi-invariant with $C^{\infty}$ rho-function $\rho_{2}$ (see \cite[Theorem~B.1.4]{BHV}). %Indeed, by \cite[Theorem~B.1.4]{BHV}. %we have that $\frac{d x\omega_{g}}{d\omega_{g}}(H)=\rho_{2}(x)$.
Again by \cite[Theorem~B.1.4]{BHV}, the measures $\mu$ and $\omega_{g}$ are equivalent and $\frac{d\mu}{d\omega_{g}}= \frac{1}{\rho_{2}}$. This shows that $\mu$ is a volume form. Then one use Corollary \ref{cor80} to get the  Proposition \ref{Porpositionune}.

In the general case the essentiality of the action ensure the non discreetness of  the stabilizer $H$.
\begin{prop}
The stabilizer $H$ is not discrete.
\end{prop}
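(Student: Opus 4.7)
The plan is to argue by contradiction: if $H$ were discrete, then $G$ would have to preserve a volume form on $M$, and the proposition preceding Corollary \ref{cor80} would then force $G$ to preserve a metric in the conformal class $[g]$, contradicting essentiality.

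Suppose therefore that $H$ is discrete. Then $M = G/H$ exhibits $H$ as a discrete subgroup of the connected Lie group $G$ with compact quotient, so $H$ is a uniform lattice in $G$. The key input is the classical structural theorem that every connected Lie group admitting a uniform lattice is unimodular (see, for instance, Raghunathan, \emph{Discrete Subgroups of Lie Groups}, Chapter~I). Consequently the modular function $\Delta_G$ is identically $1$, and a left-invariant top form on $G$ is automatically right-invariant.

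Combining the unimodularity of $G$ with the (automatic) unimodularity of the discrete group $H$, the standard modular criterion for homogeneous spaces produces, via descent along the covering $\pi \colon G \to M$, a smooth nowhere-vanishing $G$-invariant volume form $\omega$ on $M$. (If $M$ happens to be non-orientable, one works with volume densities; the proof of the proposition preceding Corollary~\ref{cor80} adapts verbatim to that setting.) Applying that proposition, write $\omega = e^{\phi}\omega_{g}$; then $e^{2\phi/n} g$ is a metric in $[g]$ whose volume form is precisely $\omega$, so any $f\in G$, being conformal and preserving $\omega$, must preserve $e^{2\phi/n} g$. Since the function $\phi$ depends only on $\omega$ and $g$ (not on $f$), the same metric is preserved by every element of $G$, contradicting essentiality.

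The only substantial ingredient in this plan is the cited unimodularity theorem for connected Lie groups admitting a uniform lattice; once this is granted, the descent of Haar measure and the final appeal to the preceding proposition are routine.
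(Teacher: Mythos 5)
Your proof is correct, and its second half takes a genuinely different route from the paper's. You share the first step: $H$ discrete and cocompact makes it a uniform lattice, hence $G$ is unimodular. From there the paper argues infinitesimally: essentiality together with transitivity produces an $h\in H$ that scales the form $\left\langle .,.\right\rangle$ on $\mathfrak{g}/\mathfrak{h}=\mathfrak{g}$ by a nontrivial factor, so $\left|\operatorname{det}\left(\operatorname{Ad}_{h}\right)\right|\neq 1$, contradicting unimodularity in one line. You instead argue globally: unimodularity of $G$ and of the discrete group $H$ gives a $G$-invariant volume density on $M=G/H$ by descent of Haar measure along the covering $G\to G/H$, and then the proposition preceding Corollary \ref{cor80} (indeed, Corollary \ref{cor80} itself, which you could have cited directly) upgrades this to a $G$-invariant metric in $[g]$, contradicting essentiality. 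Your version buys a cleaner treatment of one point the paper leaves implicit, namely why essentiality forces some stabilizer element to distort the metric at the base point (this needs transitivity and a pushforward argument, which you bypass entirely); the cost is the extra machinery of Haar-measure descent and the orientability caveat, which you correctly handle with densities. Note also that your argument is exactly in the spirit of the paper's own proof of Proposition \ref{Porpositionune}, which runs the same "invariant measure $\Rightarrow$ invariant volume $\Rightarrow$ contradiction with Corollary \ref{cor80}" scheme via amenability. Both proofs are valid.
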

\begin{proof}
If it was not the case then $H$ would be a uniform lattice in $G$. But as the action is essential, there is an element $h\in H$ that does not preserve the metric on $\mathfrak{g}/\mathfrak{h}$. So $\left|\operatorname{det}\left(\operatorname{Ad}_{h}\right)\right|\neq 1$ contradicting the unimodilarity of $G$.
\end{proof}

To finish this part let us prove the two following important Lemmas that will be used later in the paper:

\begin{lemm}
\label{Lemmaimport}
Let $\operatorname{\pi}:S_{nc}\longrightarrow \operatorname{GL}(V)$ be a linear representation of $S_{nc}$ into a linear space $V$. Then, the compact orbits of $S_{nc}$ are trivial.
\end{lemm}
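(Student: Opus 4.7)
The plan is to show that any vector $v\in V$ whose $S_{nc}$-orbit is compact must actually be fixed by $S_{nc}$, so that the orbit reduces to the single point $\{v\}$. I reduce the problem to the statement that every unipotent one-parameter subgroup of $S_{nc}$ fixes $v$; this is enough because a connected semi-simple Lie group with no compact factors is generated by its unipotent one-parameter subgroups---concretely, by the root subgroups associated to the root-space Lie algebras $\mathfrak{s}_\alpha$ ($\alpha\in\Delta$), whose elements are all $\ad$-nilpotent.

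To handle those unipotent subgroups, fix a nilpotent element $X\in\mathfrak{s}_{nc}$. Because $\mathfrak{s}_{nc}$ is semi-simple and finite-dimensional representations of a semi-simple Lie algebra preserve the abstract Jordan decomposition (the same fact already invoked in the Notations section to diagonalise $\mathfrak{a}$), the endomorphism $d\pi(X)\in\End(V)$ is nilpotent. Consequently the curve
\[
\gamma(t)=\pi(\exp(tX))\,v=\exp\bigl(t\,d\pi(X)\bigr)v=\sum_{k=0}^{N}\frac{t^{k}}{k!}\,d\pi(X)^{k}v
\]
is a polynomial map $\R\to V$. Its image lies inside the compact orbit $S_{nc}\cdot v$, hence it is bounded; a bounded polynomial map with values in a finite-dimensional vector space is constant. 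This forces $d\pi(X)v=0$, so $\exp(tX)$ fixes $v$ for every $t\in\R$.

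Combining the two steps, $v$ is fixed by every root subgroup $S_\alpha$ of $S_{nc}$, and therefore by all of $S_{nc}$, which gives the claim. The argument is short and the potential obstacle is not really an obstacle: one simply has to invoke two classical structural facts---the generation of $S_{nc}$ by its unipotent one-parameter subgroups, and the preservation of the Jordan decomposition under semi-simple representations---both standard in the theory of real semi-simple Lie groups.
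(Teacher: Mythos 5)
Your proof is correct, but it takes a genuinely different route from the paper's. The paper argues globally and ``softly'': from a compact orbit $\mathcal{C}$ it forms the compact convex symmetric set $\operatorname{Conv}(\mathcal{C}\cup-\mathcal{C})$, notes that its Minkowski gauge is an $S_{nc}$-invariant norm (strictly speaking, on the linear span of $\mathcal{C}$, where the convex hull has non-empty interior --- a harmless imprecision in the paper, since that span is invariant), and concludes because the isometry group of a norm is compact while any homomorphism from a semi-simple group with no compact factor to a compact group is trivial. You instead argue pointwise and dynamically: preservation of the Jordan decomposition makes $d\pi(X)$ nilpotent for every restricted root vector $X$, so the orbit map of the corresponding unipotent one-parameter subgroup is polynomial; boundedness of the orbit forces it to be constant, hence $d\pi(X)v=0$; and generation of $S_{nc}$ by its root subgroups --- valid precisely because there are no compact factors --- finishes. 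The two arguments localize the ``no compact factor'' hypothesis at opposite ends: the paper uses it last (no nontrivial compact quotients), you use it first (generation by unipotents). Your version needs only boundedness, not compactness, of the orbit, and yields directly that every point of the orbit is fixed; the paper's version trades the Jordan-decomposition and generation facts for a single structural fact about compact groups and shows at once that the action on the whole span of the orbit is trivial. Both arguments, like all applications in the paper (e.g.\ to $\operatorname{Sym}(\mathfrak{g})$), implicitly assume $V$ finite dimensional.
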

\begin{proof}
Assume that $S_{nc}$ has a compact orbit $\mathcal{C}\subset V$. Then the convex envelope $\operatorname{Conv}(\mathcal{C}\cup -\mathcal{C})$ is an $S_{nc}-$invariant compact convex symmetric set with non empty interior. Thus the action of $S_{nc}$ preserves the Minkowski gauge $\left\|.\right\|$ (which is in fact a norm) of $\operatorname{Conv}(\mathcal{C}\cup -\mathcal{C})$. But $\operatorname{Isom}\left(\operatorname{Conv}(\mathcal{C}\cup -\mathcal{C}), \left\|.\right\|\right)$ is compact. So the restriction of the representation $\pi$ to $\operatorname{Conv}(\mathcal{C}\cup -\mathcal{C})$ gives rise to an homomorphism from a semi-simple group with no compact factor  to a compact group and hence is trivial.
\end{proof}
\begin{lemm}
\label{Lemmaimportprime}
A linear representation $\operatorname{\pi}:\mathfrak{s}_{nc}\longrightarrow \operatorname{gl}(V)$ of $\mathfrak{s}_{nc}$ into a linear space $V$ is completely determined by its restriction to $\mathfrak{a}\oplus\mathfrak{m}\oplus  \mathfrak{s}_{+}$. More precisely, $\operatorname{\pi}_{\mathfrak{s}_{nc}}(V)=\operatorname{Vect}\left(\operatorname{\pi}_{\mathfrak{a}\oplus\mathfrak{m}\oplus  \mathfrak{s}_{+}}\left(V\right)\right)$. 
\end{lemm}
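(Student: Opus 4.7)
The plan is to reduce the statement to showing $\pi(\mathfrak{s}_{-})(V) \subset \operatorname{Vect}(\pi(\mathfrak{a}\oplus\mathfrak{m}\oplus\mathfrak{s}_{+})(V))$, since the decomposition $\mathfrak{s}_{nc}=\mathfrak{s}_{-}\oplus\mathfrak{a}\oplus\mathfrak{m}\oplus\mathfrak{s}_{+}$ is direct and makes the reverse inclusion immediate. By further splitting $\mathfrak{s}_{-}=\bigoplus_{\alpha\in\Delta^{+}}\mathfrak{s}_{-\alpha}$, it suffices to show that for every positive root $\alpha\in\Delta^{+}$ and every $F\in\mathfrak{s}_{-\alpha}$ one has $\pi(F)(V)\subset \operatorname{Vect}\bigl(\pi(\mathfrak{a})(V)+\pi(\mathfrak{s}_{\alpha})(V)\bigr)$.

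The main tool will be Jacobson--Morozov: for nonzero $F\in\mathfrak{s}_{-\alpha}$ one produces $E\in\mathfrak{s}_{\alpha}$ and $H\in\mathfrak{a}$ (normalised so that $\alpha(H)=2$) such that $(H,E,F)$ is an $\mathfrak{sl}_{2}$-triple inside $\mathfrak{s}_{nc}$. Restricting $\pi$ to this copy of $\mathfrak{sl}_{2}(\mathbb{R})$ and invoking Weyl's complete reducibility, I would decompose the (finite-dimensional) space $V=\bigoplus_{i} W_{i}$ into $\mathfrak{sl}_{2}$-irreducibles and treat each piece in isolation.

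Inside an irreducible $\mathfrak{sl}_{2}$-module $W_{i}$ of highest weight $k$, take the standard weight basis $v_{0},\ldots,v_{k}$, for which $\pi(E)(v_{j})=c_{j}v_{j-1}$ with $c_{j}\neq 0$, $\pi(F)(v_{j})=v_{j+1}$ (with $\pi(F)(v_{k})=0$), and $\pi(H)(v_{j})=(k-2j)v_{j}$. The first formula shows $v_{0},\ldots,v_{k-1}\in\pi(E)(W_{i})$, while for $k\geq 1$ the third gives $v_{k}=-\tfrac{1}{k}\pi(H)(v_{k})\in\pi(H)(W_{i})$. Since $\pi(F)(W_{i})=\operatorname{Vect}(v_{1},\ldots,v_{k})$, this yields $\pi(F)(W_{i})\subset \pi(E)(W_{i})+\pi(H)(W_{i})$; the case $k=0$ is trivial. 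Summing over $i$ completes the proof.

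The only mildly delicate step is this $\mathfrak{sl}_{2}$ bookkeeping: one genuinely needs both $\pi(E)$ (to retrieve the upper weight components of $\pi(F)(W_{i})$) and $\pi(H)$ (to retrieve the lowest one), which is what forces $\mathfrak{a}$ to appear in the statement alongside $\mathfrak{s}_{+}$. The subalgebra $\mathfrak{m}$ itself plays no direct role in the argument; it is carried along simply because the zero weight space of $\mathfrak{s}_{nc}$ is $\mathfrak{s}_{0}=\mathfrak{a}\oplus\mathfrak{m}$.
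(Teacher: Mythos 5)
Your proof is correct and follows essentially the same route as the paper: reduce to a single $F\in\mathfrak{s}_{-\alpha}$, embed it in an $\mathfrak{sl}_2$-triple $(H,E,F)$ with $E\in\mathfrak{s}_{\alpha}$ and $H\in\mathfrak{a}$, apply Weyl's complete reducibility, and check the inclusion $\pi(F)(W_i)\subset\pi(E)(W_i)+\pi(H)(W_i)$ on each irreducible via the standard weight basis --- you merely make explicit the computation the paper dismisses as ``easy to check.'' One small remark: general Jacobson--Morozov does not by itself place $E$ in $\mathfrak{s}_{\alpha}$ and $H$ in $\mathfrak{a}$; for that you should use the Cartan-involution construction $E\sim\Theta(F)$, $H\sim[F,\Theta(F)]$ (the paper's reference to Knapp, Proposition~6.52), which is what you implicitly rely on.
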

\begin{proof}
It is in fact sufficient to show that $\operatorname{\pi}_{\mathfrak{s}_{-}}\left(V\right)\subset \operatorname{Vect}\left(\operatorname{\pi}_{\mathfrak{a}\oplus\mathfrak{s}_{+}}\left(V\right)\right)$. For that, fix $x\in \mathfrak{s}_{-\alpha}\subset \mathfrak{s}_{-}$ and let $a\in \mathfrak{a}$ such that $\mathbb{R}x\oplus \mathbb{R}a\oplus \mathbb{R}\Theta(x)\cong \mathfrak{sl}(2,\mathbb{R})$ (see for example \cite[Proposition~6.52]{K}). Thus the restriction of $\operatorname{\pi}$ to $\mathbb{R}x\oplus \mathbb{R}a\oplus \mathbb{R}\Theta(x)$ is isomorphic to a linear representation of $\mathfrak{sl}(2,\mathbb{R})$ into $V$. Using  Weyl Theorem we can assume without loss of generality that this last is irreducible. But irreducible linear representations of $\mathfrak{sl}(2,\mathbb{R})$ into $V$ are unique up to isomorphism (see for instance \cite[Theorem~4.32]{Hall}). It is then easy to check that they verify $\operatorname{\pi}(x)(V)\subset \operatorname{Vect}\left(\operatorname{\pi}_{\mathbb{R}a\oplus \mathbb{R}\Theta(x)}\left(V\right)\right)$ (see \cite[Examples~4.2]{Hall}). This finishes the proof.
\end{proof}

\section{Lie algebra formulation}

\subsection{Enlargement of the isotropy group}
\label{subsection}
As the manifold $G/H$ is compact, the isotropy subgroup $H$ is a uniform subgroup of $G$. If $H$ was discrete then it is a uniform lattice and in this case $G$ would be  unimodular. In the non discrete case, this imposes strong restrictions on the group $H$. When $H$ and $G$ are both complex algebraic it is equivalent to being parabolic i.e contains maximal solvable connected subgroup of $H$. In the real case, Borel and Tits \cite{BT} proved that an algebraic group $H$ of a real linear algebraic group $G$ is uniform if it contains a maximal connected triangular subgroup of $G$. Recall that a subgroup of $G$ (respectively a sub-algebra of $\mathfrak{g}$) is said to be triangular if, in some real basis of $\mathfrak{g}$,  its image under the adjoint representation is triangular. 

Let $H^{*}=\operatorname{Ad}^{-1}\left(\overline{\operatorname{Ad}(H)}^{Zariski}\right)$ be the smallest algebraic Lie subgroup of $G$ containing $H$. By \cite[Corollary~5.1.1]{GO}, the Lie algebra $\mathfrak{h}^{*}$ of $H^{*}$ contains a maximal triangular sub-algebra of $\mathfrak{g}$. The sub-algebra $\left(\mathfrak{a} \oplus \mathfrak{s}_{+}\right)\ltimes \mathfrak{n}$ being triangular, we get the following fact:
\begin{fact}
\label{Fact9}
Up to conjugacy,  the sub-algebra $h^{*}$ contains $\left(\mathfrak{a} \oplus\mathfrak{s}_{+}\right)\ltimes \mathfrak{n}$. 
\end{fact}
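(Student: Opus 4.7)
The plan is to observe that $\mathfrak{b}:=(\mathfrak{a}\oplus\mathfrak{s}_{+})\ltimes\mathfrak{n}$ is itself a triangular subalgebra of $\mathfrak{g}$, combine this with \cite[Corollary~5.1.1]{GO} which guarantees that $\mathfrak{h}^{*}$ contains \emph{some} maximal triangular subalgebra $\mathfrak{t}_{0}$, and then invoke the classical conjugacy theorem asserting that all maximal triangular subalgebras of a real Lie algebra are conjugate under $\operatorname{Ad}(G)$. Once triangularity of $\mathfrak{b}$ is in hand, the conjugation produced by that theorem is precisely the freedom hidden in the phrase ``up to conjugacy''.

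First I would check that $\mathfrak{b}$ is a subalgebra. Closure of $\mathfrak{a}\oplus\mathfrak{s}_{+}$ under brackets follows from the Iwasawa-type relations $[\mathfrak{a},\mathfrak{s}_{\alpha}]\subset\mathfrak{s}_{\alpha}$ and $[\mathfrak{s}_{\alpha},\mathfrak{s}_{\beta}]\subset\mathfrak{s}_{\alpha+\beta}$ for $\alpha,\beta\in\Delta^{+}$, while $[\mathfrak{a}\oplus\mathfrak{s}_{+},\mathfrak{n}]\subset\mathfrak{n}$ because $\mathfrak{n}$ is an ideal of $\mathfrak{g}$. For triangularity I would pick a real basis of $\mathfrak{g}$ refining the weight decomposition $\mathfrak{g}=\mathfrak{g}_{0}\oplus\bigoplus_{\alpha\neq 0}\mathfrak{g}_{\alpha}$, ordered so that weights increase compatibly with $\Delta^{+}$, and further refine within each weight block so that $\operatorname{ad}\mathfrak{n}$ (nilpotent because $\mathfrak{n}$ is a nilpotent ideal) is simultaneously upper-triangular. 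In this basis $\operatorname{ad}\mathfrak{a}$ is diagonal, $\operatorname{ad}\mathfrak{s}_{+}$ strictly raises weights, and $\operatorname{ad}\mathfrak{n}$ is block-upper-triangular within each weight block; hence $\operatorname{ad}\mathfrak{b}$ is upper-triangular in a real basis, which is the definition of triangular used here.

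Being triangular, $\mathfrak{b}$ is contained in some maximal triangular subalgebra $\mathfrak{t}_{\max}\subset\mathfrak{g}$. The conjugacy theorem then yields $g\in G$ with $\operatorname{Ad}(g)\mathfrak{t}_{\max}=\mathfrak{t}_{0}\subset\mathfrak{h}^{*}$, whence $\operatorname{Ad}(g)\mathfrak{b}\subset\mathfrak{h}^{*}$. Replacing the base point $x$ by $g^{-1}\cdot x$ (equivalently $H$ by $g^{-1}Hg$, a cosmetic change that leaves the whole setup intact) restores $\mathfrak{b}\subset\mathfrak{h}^{*}$, which is exactly the content of Fact~\ref{Fact9}.

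The only delicate point is locating the conjugacy theorem in the precise form required and checking that the notion of triangular used there matches the one fixed in the paper (simultaneous real triangularisation of $\operatorname{ad}$, equivalent to solvability together with real eigenvalues on all $\operatorname{ad}_{X}$); the triangularity verification for $\mathfrak{b}$ itself is routine weight-space bookkeeping, and no maximality statement about $\mathfrak{b}$ needs to be proved.
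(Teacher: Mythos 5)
Your proposal is correct and follows essentially the same route as the paper: the paper derives Fact~\ref{Fact9} in one line from the triangularity of $\left(\mathfrak{a}\oplus\mathfrak{s}_{+}\right)\ltimes\mathfrak{n}$ together with \cite[Corollary~5.1.1]{GO} and the (implicit) conjugacy of maximal triangular subalgebras, which is exactly your argument. You merely make explicit the two points the paper leaves to the reader, namely the verification that $\left(\mathfrak{a}\oplus\mathfrak{s}_{+}\right)\ltimes\mathfrak{n}$ is a triangular subalgebra and the appeal to Mostow's conjugacy theorem for maximal triangular subalgebras.
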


Consider the vector space $\operatorname{Sym}(\mathfrak{g})$ of bilinear symmetric forms on $\mathfrak{g}$. The group $G$ acts naturally on $\operatorname{Sym}(\mathfrak{g})$ by $g.\Phi(X,Y)=\Phi(Ad_{g^{-1}}X,Ad_{g^{-1}}Y)$.
Let $\left\langle .,.\right\rangle$ be the bilinear symmetric form on $\mathfrak{g}$ defined by $$\left\langle X,Y\right\rangle=g\left(X^{*}(x),Y^{*}(x)\right),$$ where $g$ is the pseudo-Riemannian metric, $X^{*}$, $Y^{*}$ are the fundamental vector fields associated to $X$ and $Y$ and $x$ is the point fixed previously. It is a degenerate symmetric form with kernel equal to $\mathfrak{h}$. 

Let $P$ be the subgroup of $G$ preserving the conformal class of $\left\langle .,.\right\rangle$. It is an algebraic group containing $H$  and  normalizing the sub-algebra $\mathfrak{h}$. In particular, it contains  $H^{*}$: the smallest algebraic group containing $H$. Using Fact \ref{Fact9} we get 
that up to conjugacy, the Lie algebra $\mathfrak{p}$ of $P$ contains $\left(\mathfrak{a} \oplus\mathfrak{s}_{+}\right)\ltimes \mathfrak{n}$.

\begin{prop}
\label{propositiondeux}
The Cartan sub-group $A$ does not preserve the metric $\left\langle .,.\right\rangle$.
\end{prop}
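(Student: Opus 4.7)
Assume, toward a contradiction, that $A$ preserves the bilinear form $\langle .,.\rangle$. Let $\lambda\colon P\to \mathbb{R}^{*}$ be the conformal multiplier character determined by $p\cdot \langle .,.\rangle = \lambda(p)\,\langle .,.\rangle$ for every $p\in P$. The hypothesis reads $\lambda|_{A}\equiv 1$, equivalently $d\lambda$ vanishes on $\mathfrak{a}$. The strategy is to propagate this vanishing to all of $\mathfrak{h}^{*}$ and thereby to $\mathfrak{h}$, which would produce an $H$-invariant representative of the conformal class $[\langle .,.\rangle]$ at $x$; by transitivity this would yield a $G$-invariant metric in $[g]$, contradicting essentiality via Corollary~\ref{cor80}.

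\textbf{Weight pairing and vanishing on positive weight parts.} Because $\mathfrak{a}$ acts diagonalizably on $V:=\mathfrak{g}/\mathfrak{h}$, we have a weight decomposition $V=\bigoplus_{\beta} V_{\beta}$. The $A$-invariance of $\langle .,.\rangle$ forces $\langle V_{\beta}, V_{\gamma}\rangle = 0$ whenever $\beta+\gamma\neq 0$, and the non-degeneracy of $\langle .,.\rangle$ on $V$ turns the pairing $V_{\beta}\times V_{-\beta}\to \mathbb{R}$ into a perfect duality. Feeding this into the conformal identity
\[
\langle [X,Y],Z\rangle + \langle Y,[X,Z]\rangle = d\lambda(X)\,\langle Y,Z\rangle, \qquad X\in \mathfrak{p},
\]
with $X\in \mathfrak{p}_{\alpha}$ for some $\alpha\neq 0$ and with $Y\in V_{\beta}$, $Z\in V_{-\beta}$ chosen so that $\langle Y,Z\rangle \neq 0$, the left-hand side involves pairings of total weight $\alpha\neq 0$ and therefore vanishes; hence $d\lambda(X)=0$. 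This gives $d\lambda\equiv 0$ on $\mathfrak{s}_{+}$ and on every $\mathfrak{n}_{\alpha}$ with $\alpha\neq 0$.

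\textbf{Nilpotent and compact pieces; conclusion.} The connected nilpotent group $N$ acts unipotently via $\operatorname{Ad}$, and since $\mathbb{R}^{*}$ contains no non-trivial unipotent element, $\lambda|_{N}\equiv 1$; in particular $d\lambda$ vanishes on $\mathfrak{n}_{0}$ as well. The same rigidity applies to the compact algebras $\mathfrak{s}_{c}$ and $\mathfrak{m}$, and more generally to every compact subgroup of $P$, which admit no non-trivial real characters; hence $d\lambda|_{\mathfrak{s}_{c}\oplus \mathfrak{m}}=0$. By Fact~\ref{Fact9}, $\mathfrak{h}^{*}$ contains $(\mathfrak{a}\oplus \mathfrak{s}_{+})\ltimes \mathfrak{n}$; combining this with the compact contributions and with the hypothesis $d\lambda|_{\mathfrak{a}}=0$, the vanishing of $d\lambda$ covers the entire semi-simple, nilpotent, and compact contents of $\mathfrak{h}^{*}$. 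The remaining piece --- possible toral contributions from $\mathfrak{r}/\mathfrak{n}$ lying in $\mathfrak{h}^{*}$ --- is the main technical obstacle: one must rule them out, or else show that the constraints of $\mathfrak{h}^{*}$ being algebraic and normalizing the kernel $\mathfrak{h}$ of $\langle .,.\rangle$ force $d\lambda$ to vanish there too. Once this is secured, $d\lambda\equiv 0$ on $\mathfrak{h}^{*}$, hence $\lambda|_{H^{*}}\equiv 1$, and a fortiori $\lambda|_{H}\equiv 1$, contradicting essentiality.
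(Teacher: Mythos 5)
Your argument is incomplete, and the gap you flag at the end is not a removable technicality but the heart of the matter. Your strategy is to kill the multiplier character $\lambda$ on all of $\mathfrak{h}$ by covering $\mathfrak{h}$ with pieces on which $d\lambda$ visibly vanishes (unipotent pieces, compact pieces, $\mathfrak{a}$ by hypothesis). But $\mathfrak{h}$ may contain elements whose component along $\mathfrak{r}_{0}$ does not lie in $\mathfrak{n}_{0}$; since $[\mathfrak{g},\mathfrak{r}]\subset\mathfrak{n}$ only forces $\operatorname{ad}_{X}(\mathfrak{g})\subset\mathfrak{n}$, such an $X$ can act non-unipotently (as the dilation generator in $\mathbb{R}\ltimes\mathbb{R}$ shows), and none of your mechanisms reaches it. There are also two upstream problems: $\lambda$ and $d\lambda$ are only defined on $P$ and $\mathfrak{p}$, and neither $\mathfrak{m}$, $\mathfrak{s}_{c}$, nor the individual weight components of a given element of $\mathfrak{h}$ need lie in $\mathfrak{p}$, so you cannot evaluate $d\lambda$ summand by summand; and passing from $d\lambda|_{\mathfrak{h}^{*}}=0$ to $\lambda|_{H}\equiv 1$ requires a word about the (possibly disconnected) component group, though that part is fixable using positivity of the multiplier and algebraicity of $H^{*}$.

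The paper's proof takes a different route that absorbs the solvable directions without ever computing a character on them. From the hypothesis that $A$ preserves $\left\langle .,.\right\rangle$ and the unipotence of $S_{+}$ and $N$ on $\operatorname{Sym}(\mathfrak{g})$, the group $A\ltimes S_{+}$ fixes the form; since it is cocompact in $S$ by the Iwasawa decomposition, the $S$-orbit of $\left\langle .,.\right\rangle$ in $\operatorname{Sym}(\mathfrak{g})$ is compact, hence trivial by Lemma \ref{Lemmaimport}, so all of $S$ preserves the metric and $S,N\subset P$. This compact-orbit step is entirely missing from your proposal. The contradiction is then obtained by determinant bookkeeping rather than by killing a character on $\mathfrak{h}$: the $P$-action on $\mathfrak{g}/\mathfrak{p}$ becomes a quotient of the action of the compact group $S_{c}$ on $\mathfrak{s}_{c}$, so $\left|\operatorname{det}(\operatorname{Ad}_{h})_{\vert\mathfrak{g}/\mathfrak{p}}\right|=1$; unimodularity of $P$ after quotienting by $H^{\circ}$ (where $H/H^{\circ}$ is a uniform lattice) gives $\left|\operatorname{det}(\operatorname{Ad}_{h})_{\vert\mathfrak{p}/\mathfrak{h}}\right|=1$; multiplying yields $\left|\operatorname{det}(\operatorname{Ad}_{h})_{\vert\mathfrak{g}/\mathfrak{h}}\right|=1$ for all $h\in H$, contradicting essentiality (Corollary \ref{cor80}). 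If you want to salvage your route, you need an independent argument handling $\mathfrak{h}\cap(\mathfrak{r}_{0}+\cdots)$ modulo $\mathfrak{n}_{0}$; the unimodularity argument is precisely what replaces it in the paper.
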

\begin{proof}
First as $\mathfrak{h}$ is an ideal of $\mathfrak{p}$ then by taking quotient of both $P$ and $H$ by $H^{°}$, we can suppose that $H$ is  a uniform lattice of $P$ and in particular that $P$ is unimodular.

Assume that $A$ preserves the metric $\left\langle .,.\right\rangle$. On the one hand, the groups $S_{+}$ and $N$ preserve the conformal class of $\left\langle .,.\right\rangle$. On the other hand, they  act on $\operatorname{Sym}(\mathfrak{g})$ by unipotent elements. So the groups $A$, $S_{+}$, and $N$  preserve the metric $\left\langle .,.\right\rangle$. But by Iwasawa  decomposition  $\left(A\ltimes S_{+}\right)$ is co-compact in $S$. Thus the $S-$orbit of $\left\langle .,.\right\rangle$ is compact in $\operatorname{Sym}(\mathfrak{g})$ and hence trivial by Lemma \ref{Lemmaimport}. Therefore  $S$ and $N$  are subgroups of $P$. This implies that for any $p\in P$,  $\left|\operatorname{det}\left(\operatorname{Ad}_{p}\right)_{\vert{\mathfrak{g}/\mathfrak{p}}}\right|=1$. Indeed, the action of $G$ on $(\mathfrak{s}_{c}+\mathfrak{r})/\mathfrak{n}$ factors trough the product of the action of $S_{c}$ on $\mathfrak{s}_{c}$ by the trivial action on $\mathfrak{r}/\mathfrak{n}$. As $P$ contains $S$ and  $N$, its action on $\mathfrak{g}/\mathfrak{p}$ is a quotient of the action of  $S_{c}$  on $\mathfrak{s}_{c}$. But $S_{c}$ is compact, thus it preserves some positive definite scalar product and hence the determinant $\left|\operatorname{det}\left(\operatorname{Ad}_{p}\right)_{\vert{\mathfrak{g}/\mathfrak{p}}}\right|=1$.

%Let us consider the projection morphism $\pi:P\rightarrow P/(S\ltimes N)$ and let $Q=\pi^{-1}(\bar{\pi(H)})$. This is a closed subgroup of $P$ containing $H$. 

Now let $h\in H$ such that $\operatorname{Ad}_{h}$ does not preserve $\left\langle .,.\right\rangle$. We have that $$1\neq \left|\operatorname{det}\left(\operatorname{Ad}_{h}\right)_{\vert{\mathfrak{g}/\mathfrak{h}}}\right|=\left|\operatorname{det}\left(\operatorname{Ad}_{h}\right)_{\vert{\mathfrak{g}/\mathfrak{p}}}\right|\left| \operatorname{det}\left(\operatorname{Ad}_{h}\right)_{\vert{\mathfrak{p}/\mathfrak{h}}}\right|$$

Finally we get $\left|\operatorname{det}\left(\operatorname{Ad}_{h}\right)_{\vert{\mathfrak{p}/\mathfrak{h}}}\right|\neq 1$ which contradicts the unimodularity of $P$.
\end{proof}

\subsection{Distortion}
The group $P$ preserves the conformal class of $\left\langle .,.\right\rangle$. There exists thus an homomorphism $\delta: P\rightarrow \mathbb{R}$ such that: for every $p\in P$ and every $u,v\in \mathfrak{g}$, 
\begin{equation}
\label{equationdeux}
\left\langle\operatorname{Ad}_{p}(u), \operatorname{Ad}_{p}(v)\right\rangle=e^{\delta(p)}\left\langle u,v\right\rangle=\left|\operatorname{det}\left(\operatorname{Ad}_{p}\right)_{\vert{\mathfrak{g}/\mathfrak{h}}}\right|^{\frac{2}{n}}\left\langle u,v\right\rangle
\end{equation}

In particular if $p\in P$ preserves the metric then $\delta(p)=0$ and 

\begin{equation}
\label{equationquatre}
\left\langle\operatorname{Ad}_{p}(u), \operatorname{Ad}_{p}(v)\right\rangle=\left\langle u,v\right\rangle
\end{equation}

Or equivalently 

\begin{equation}
\label{equationcinq}
\left\langle\operatorname{ad}_{p}(u),v\right\rangle + \left\langle u, \operatorname{ad}_{p}(v)\right\rangle=0 
\end{equation}

It follows that if the action of $p\in P$ on $\mathfrak{g}$ is unipotent then $\delta(p)=0$.  Therefore, the homomorphism $\delta$ is trivial on $S_{-}$ and $N$ but not on $A$ by Proposition \ref{propositiondeux}. We continue to denote by $\delta$ the restriction of $\delta$ to $A$. We can see it alternatively as a linear form $\delta:\mathfrak{a}\rightarrow \mathbb{R}$, called \textbf{distortion}, verifying: for every $a\in \mathfrak{a}$ and every $u,v\in \mathfrak{g}$, 

\begin{equation}
\label{equationtrois}
\left\langle\operatorname{ad}_{a}(u),v\right\rangle + \left\langle u, \operatorname{ad}_{a}(v)\right\rangle=\delta(a)\left\langle u,v\right\rangle
\end{equation}

\begin{defi}
Two weights spaces $\mathfrak{g}_{\alpha}$ and $\mathfrak{g}_{\beta}$ are said to be paired if they are not $\left\langle .,.\right\rangle-$orthogonal. 
\end{defi}
\begin{defi}
A weight $\alpha$ is a non-degenerate weight if $\mathfrak{g}_{\alpha}$ is not contained in $\mathfrak{h}$.
\end{defi}

\begin{defi}
We say that a subalgebra $\mathfrak{g}'$ is a modification of $\mathfrak{g}$ if $\mathfrak{g}'$ projects surjectively on $\mathfrak{g}/\mathfrak{h}$. In this case $\mathfrak{g}'/\left(\mathfrak{g}'\cap \mathfrak{h}\right)=\mathfrak{g}/\mathfrak{h}$. 
\end{defi}

\begin{prop}
\label{proposition8}
If the weight space $\mathfrak{g}_{0}$ is degenerate then up to modification, $\mathfrak{g}$ is semi-simple and  $M=G/H$ is conformally flat.
\end{prop}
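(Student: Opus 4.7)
The plan is to exploit the hypothesis $\mathfrak{g}_{0}\subset\mathfrak{h}$ (the natural reading of ``$\mathfrak{g}_0$ is degenerate'', per the definition of non-degenerate weight), collapse the solvable radical into the isotropy via the distortion-pairing, produce a semi-simple modification $\mathfrak{g}'$, and finally identify the associated homogeneous space with an Einstein universe.

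First I extract structural consequences. The distortion identity \eqref{equationtrois} forces $\langle X,Y\rangle=0$ for $X\in\mathfrak{g}_\alpha$ and $Y\in\mathfrak{g}_\beta$ whenever $\alpha+\beta\neq\delta$, so each weight space pairs modulo $\mathfrak{h}$ only with $\mathfrak{g}_{\delta-\alpha}$. Since $\mathfrak{g}_0\subset\mathfrak{h}$ lies in the kernel of $\langle\cdot,\cdot\rangle$, its would-be partner $\mathfrak{g}_\delta$ pairs non-trivially with nothing; non-degeneracy of the induced form on $\mathfrak{g}/\mathfrak{h}$ then forces $\mathfrak{g}_\delta\subset\mathfrak{h}$ as well. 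In particular $\mathfrak{r}_0=\mathfrak{r}\cap\mathfrak{g}_0\subset\mathfrak{h}$, absorbing the $\mathfrak{a}$-centralized part of the solvable radical.

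Next I construct the modification. The only remaining radical contribution to $\mathfrak{g}/\mathfrak{h}$ comes from the non-zero-weight pieces $\mathfrak{n}_\alpha$. The $\mathfrak{s}$-equivariant pairing $\mathfrak{n}_\alpha\times\mathfrak{g}_{\delta-\alpha}\to\mathbb{R}$ together with Lemma \ref{Lemmaimportprime} constrains the surviving $\mathfrak{s}$-isotypic components of $\mathfrak{n}$ to be those already appearing as $\mathfrak{s}$-submodules of $\mathfrak{s}$ itself. Invoking Whitehead's lemma $H^{1}(\mathfrak{s},\mathfrak{n})=0$, one may conjugate $\mathfrak{s}$ inside $\mathfrak{g}$ by a suitable element of $\exp(\mathfrak{n})$ so as to absorb these components, yielding a semi-simple subalgebra $\mathfrak{g}'\cong\mathfrak{s}$ with $\mathfrak{g}'+\mathfrak{h}=\mathfrak{g}$, i.e.\ a modification.

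To conclude conformal flatness: after modification, $\mathfrak{h}'=\mathfrak{h}\cap\mathfrak{g}'$ contains the triangular subalgebra $\mathfrak{a}\oplus\mathfrak{m}\oplus\mathfrak{s}_+$ (Fact \ref{Fact9}) together with all intersections $\mathfrak{g}_\alpha\cap\mathfrak{g}'$ with $\mathfrak{g}_\alpha\subset\mathfrak{h}$ (notably $\alpha=0,\delta$); these assemble into a parabolic subalgebra of $\mathfrak{g}'$. So $M$ is locally a generalized flag variety $G'/P'$ on which the conformal class of $g$ must agree with the unique $G'$-invariant one; the developing map then identifies $M$ with an open subset of the Einstein universe $\operatorname{Ein}^{p,q}$ of the appropriate signature, proving conformal flatness. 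The main obstacle is the middle step: controlling precisely which $\mathfrak{s}$-isotypic components of $\mathfrak{n}$ survive modulo $\mathfrak{h}$ and producing an explicit twist ensuring that the conjugated $\mathfrak{s}$ surjects onto the full tangent space, rather than missing an $\mathfrak{s}$-submodule.
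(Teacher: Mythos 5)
Your reading of the hypothesis is correct and your opening observations ($\mathfrak{a}\subset\mathfrak{h}$, $\mathfrak{r}_0\subset\mathfrak{h}$, the pairing constraint forcing $\mathfrak{g}_\delta\subset\mathfrak{h}$) are consistent with the paper, but both of your main steps have genuine gaps, and in each case the paper's argument is quite different. For the modification, you do not need Whitehead's lemma or any conjugation of the Levi factor, and that device cannot in fact close the gap: conjugating $\mathfrak{s}$ by an element of $\exp(\mathfrak{n})$ does not change which $\mathfrak{s}$-isotypic pieces of $\mathfrak{n}$ contribute to $\mathfrak{g}/\mathfrak{h}$, so if some $\mathfrak{n}_\beta$ genuinely survived modulo $\mathfrak{h}$, no twist of $\mathfrak{s}$ would surject onto the tangent space --- the ``main obstacle'' you flag at the end is a real hole, not a technicality. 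The paper instead shows directly that the \emph{entire} radical lies in $\mathfrak{h}$: since $\mathfrak{g}_0\subset\mathfrak{h}$ one has $\mathfrak{a}\subset\mathfrak{h}$, and because $\mathfrak{h}$ is an ideal of $\mathfrak{p}\supset\left(\mathfrak{a}\oplus\mathfrak{s}_{+}\right)\ltimes\mathfrak{n}$, one gets $\mathfrak{s}_{+}=\left[\mathfrak{s}_{+},\mathfrak{a}\right]\subset\mathfrak{h}$ and $\mathfrak{n}_\beta=\left[\mathfrak{n}_\beta,\mathfrak{a}\right]\subset\left[\mathfrak{p},\mathfrak{h}\right]\subset\mathfrak{h}$ for every $\beta\neq 0$; combined with $\mathfrak{r}\subset\mathfrak{g}_0+\mathfrak{n}$ this yields $\mathfrak{r}\subset\mathfrak{h}$, so $\mathfrak{s}$ itself is already the desired semi-simple modification.

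The conformal flatness step is also not established by your argument. Knowing that $\mathfrak{h}$ contains a parabolic does not by itself produce a developing map into $\operatorname{Ein}^{p,q}$: the existence of such a map is equivalent to conformal flatness, so invoking it is circular, and the claim that the invariant conformal class on the resulting generalized flag variety is unique (hence the flat one) is unproved. The paper's actual mechanism is local and relies on an external theorem: after the modification one has $\mathfrak{g}_0\oplus\mathfrak{s}_{+}\subset\mathfrak{h}$, so for $X$ in the highest root space $\mathfrak{g}_{\alpha_{max}}$ one checks that $\left[X,\mathfrak{g}\right]\subset\mathfrak{g}_0\oplus\mathfrak{s}_{+}\subset\mathfrak{h}$ (no root exceeds $\alpha_{max}$), hence $e^{X}$ is a nontrivial conformal transformation fixing the base point with trivial differential there. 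Conformal flatness then follows from the Frances--Melnick theorem on conformal transformations with trivial $1$-jet at a fixed point, \cite[Theorem~1.4]{CharlesMelnick}. This input is the heart of the proof and is entirely absent from your proposal.
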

\begin{proof}
On the one hand, $\mathfrak{g}_{0}\subset\mathfrak{h}$ implies that $\mathfrak{a}\subset \mathfrak{h}$. As $\mathfrak{h}$ is an ideal of $\mathfrak{p}$, we get that $\mathfrak{s}_{+}=\left[\mathfrak{s}_{+},\mathfrak{a}\right]\subset \mathfrak{h}$. On the other hand,  $\mathfrak{r}\subset \mathfrak{g}_{0}+\mathfrak{n}\subset \mathfrak{g}_{0}+\left[\mathfrak{n},\mathfrak{a}\right]\subset \mathfrak{h}$. Thus, up to modification, we can assume  that $\mathfrak{g}$ is semi-simple and that $\mathfrak{h}$ contains $\mathfrak{a}+\mathfrak{s}_{+}$. 

Now let $\alpha_{max}$ be the highest positive root and let $X\in \mathfrak{g}_{\alpha_{max}}$. Then $d_{1}e^{X}:\mathfrak{g}/\mathfrak{h}\rightarrow \mathfrak{g}/\mathfrak{h}$ is trivial. Yet $e^{X}$ is not trivial. We conclude using \cite[Theorem~1.4]{CharlesMelnick}.
\end{proof}

A direct consequence of Equation \ref{equationtrois}, is that if $\mathfrak{g}_{\alpha}$ and $\mathfrak{g}_{\beta}$ are paired then $\alpha+\beta=\delta$. This shows that if $\alpha$ is a non-degenerate weight then $\delta-\alpha$ is also a non-degenerate weight. In particular if $0$ is a non-degenerate weight, then  $\mathfrak{g}_{0}$ and $\mathfrak{g}_{\delta}$ are paired and hence $\delta$ is a non-degenerate weight. In fact:

\begin{prop}
\label{proposition9}
If $0$ is a non-degenerate weight then $\delta$ is a root. Moreover $\mathfrak{s}_{\delta}\not\subset \mathfrak{h}$.
\end{prop}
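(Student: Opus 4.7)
The plan is to argue the contrapositive of the stronger ``moreover'' assertion (which subsumes the ``$\delta$ is a root'' claim, since $\mathfrak{s}_\delta = 0$ is the special case of $\mathfrak{s}_\delta \subset \mathfrak{h}$). Assume for contradiction that $\mathfrak{s}_\delta \subset \mathfrak{h}$; the goal is then to derive $\mathfrak{g}_0 \subset \mathfrak{h}$, contradicting the non-degeneracy of $0$.

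The preceding remark already gives $\mathfrak{g}_\delta \not\subset \mathfrak{h}$ by the pairing constraint $\alpha + \beta = \delta$. Decompose $\mathfrak{g}_\delta = \mathfrak{s}_\delta \oplus \mathfrak{n}_\delta$ and $\mathfrak{g}_0 = \mathfrak{a} \oplus \mathfrak{m} \oplus \mathfrak{r}_0$, where $\mathfrak{r}_0 = \{r \in \mathfrak{r} : [\mathfrak{a}, r] = 0\}$. Under the contradiction hypothesis, $\mathfrak{s}_\delta$ sits in the kernel of $\langle \cdot, \cdot \rangle$, so every non-trivial pairing between $\mathfrak{g}_0$ and $\mathfrak{g}_\delta$ passes through $\mathfrak{n}_\delta$. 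The heart of the argument is therefore to establish $\mathfrak{g}_0 \perp \mathfrak{n}_\delta$.

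For $X \in \mathfrak{n}_\delta$, the inclusion $X \in \mathfrak{n} \subset \mathfrak{p}$ together with the nilpotence of $\operatorname{ad}(X)$ imply that $\operatorname{ad}(X)$ is skew with respect to $\langle \cdot, \cdot \rangle$. Applied to $u, v \in \mathfrak{a}$, and using $[X, u] = -\delta(u) X$, this yields
\[
\delta(u) \langle X, v \rangle + \delta(v) \langle X, u \rangle = 0;
\]
invoking $\delta \not\equiv 0$ on $\mathfrak{a}$ (Proposition \ref{propositiondeux}) gives $X \perp \mathfrak{a}$. For $m \in \mathfrak{m}$ and $a' \in \mathfrak{a}$ with $\delta(a') \neq 0$, the skew relation applied to $(m, a')$ gives $\langle [X, m], a' \rangle = \delta(a') \langle m, X \rangle$; since $[X, m] \in \mathfrak{n}_\delta$ (using that $\mathfrak{m}$ commutes with $\mathfrak{a}$ and that $\mathfrak{n}$ is an ideal), the orthogonality $\mathfrak{n}_\delta \perp \mathfrak{a}$ just proved forces the left side to vanish, whence $\langle X, m \rangle = 0$. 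The identical argument with $r \in \mathfrak{r}_0$ in place of $m$ — again exploiting that $\mathfrak{n}$ is an ideal of $\mathfrak{g}$ to locate $[X, r] \in \mathfrak{n}_\delta$ — gives $\langle X, r \rangle = 0$.

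With $\mathfrak{g}_0 \perp \mathfrak{n}_\delta$ in hand, the contradiction is immediate: the hypothesis $\mathfrak{s}_\delta \subset \mathfrak{h}$ upgrades this to $\mathfrak{g}_0 \perp \mathfrak{g}_\delta$, and combined with $\mathfrak{g}_0 \perp \mathfrak{g}_\beta$ for every $\beta \neq \delta$ (a direct consequence of $\alpha + \beta = \delta$), one concludes $\mathfrak{g}_0 \subset \ker \langle \cdot, \cdot \rangle = \mathfrak{h}$. The main subtlety is that the distortion equation on $\mathfrak{a}$ degenerates to a tautology on pairs in $\mathfrak{a} \times \mathfrak{g}_\delta$ and gives no information inside the pairing $\mathfrak{g}_0 \times \mathfrak{g}_\delta$; the extra input — the skew-symmetry of $\operatorname{ad}(X)$ for $X$ in the nilradical — is precisely what separates the $\mathfrak{s}_\delta$ and $\mathfrak{n}_\delta$ contributions.
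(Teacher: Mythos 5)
Your proof is correct, and it closes the argument by a genuinely different route than the paper. The core computation is shared: both you and the authors exploit that an element $X$ of the nilradical lies in $\mathfrak{p}$ and acts unipotently, hence $\operatorname{ad}_X$ is skew for $\left\langle .,.\right\rangle$ (Equation \ref{equationcinq}), and combine this with $[a,X]=\delta(a)X$ to get $\mathfrak{a}\perp\mathfrak{n}_{\delta}$. The divergence is in what happens next. The paper stops at $\mathfrak{a}\perp\mathfrak{n}_{\delta}$, deduces under the contradiction hypothesis that $\mathfrak{a}\perp\mathfrak{g}_{\delta}$ and hence $\mathfrak{a}\subset\mathfrak{h}$, and then invokes the structural fact that $\mathfrak{h}$ is an ideal of $\mathfrak{p}$ to propagate this to $\mathfrak{g}_{\delta}=[\mathfrak{g}_{\delta},\mathfrak{a}]\subset\mathfrak{h}$ (using $\delta\neq 0$), contradicting the pairing of $\mathfrak{g}_{\delta}$ with $\mathfrak{g}_{0}$. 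You instead push the skewness computation further to obtain the unconditional statement $\mathfrak{g}_{0}\perp\mathfrak{n}_{\delta}$ --- the point being that $[X,u]\in\mathfrak{n}_{\delta}$ for any $u$ centralizing $\mathfrak{a}$, so the already-established orthogonality $\mathfrak{n}_{\delta}\perp\mathfrak{a}$ kills the first term of the skewness identity --- and then conclude $\mathfrak{g}_{0}\subset\ker\left\langle .,.\right\rangle=\mathfrak{h}$ directly, with no appeal to the normality of $\mathfrak{h}$ in $\mathfrak{p}$. Your version costs one extra computation but buys a stronger intermediate fact ($\mathfrak{g}_{0}\perp\mathfrak{n}_{\delta}$ holds regardless of the contradiction hypothesis, and is close in spirit to what the paper re-derives later in Lemma \ref{lemmecinq}), and it also handles a possibly higher-dimensional $\mathfrak{a}$ cleanly where the paper resorts to ``conclude by continuity.'' Two cosmetic points: your decomposition of $\mathfrak{g}_{0}$ omits the compact factor $\mathfrak{s}_{c}$ (and any zero-weight part of $\mathfrak{s}$ beyond $\mathfrak{a}\oplus\mathfrak{m}$), but since your argument only uses that $u$ centralizes $\mathfrak{a}$ and that $\mathfrak{n}$ is an ideal, it applies verbatim to every summand of $\mathfrak{g}_{0}$; and you should say explicitly that $\delta\not\equiv 0$ (Proposition \ref{propositiondeux}) is what guarantees a suitable $a'$ exists --- you do invoke it, so this is fine.
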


\begin{proof}
\label{preuveune}
First we will prove that the subalgebras $\mathfrak{a}$ and $\mathfrak{n}_{\delta}$ are $\left\langle .,.\right\rangle-$orthogonal.  Let $a\in \mathfrak{a}$ such that $\delta(a)\neq 0$. Using Equation \ref{equationtrois} for $a$, $u=a$ and $v\in \mathfrak{n}_{\delta}$, we get, $\left\langle a, \operatorname{ad}_{a}(v)\right\rangle=\delta(a)\left\langle a,v\right\rangle$. But $v$ preserves $\left\langle .,.\right\rangle$, thus by Equation \ref{equationcinq},  $\delta(a)\left\langle a,v\right\rangle=0$. Hence $\left\langle a,v\right\rangle=0$, for every $v\in \mathfrak{n}_{\delta}$. We conclude by continuity.

Now if $\delta$ was not a root then $\mathfrak{s}_{\delta}=0$ and $\mathfrak{g}_{\delta}=\mathfrak{n}_{\delta}$. Thus $\mathfrak{a}$ and $\mathfrak{g}_{\delta}$ are orthogonal. Which implies that $\mathfrak{a}\subset \mathfrak{h}$. But $\mathfrak{h}$ is an ideal of $\mathfrak{p}$, so $\mathfrak{g}_{\delta}=\left[\mathfrak{g}_{\delta},\mathfrak{a}\right]\subset \mathfrak{h}$. This contradicts the fact that $\mathfrak{g}_{\delta}$ is paired with $\mathfrak{g}_{0}$. 

To finish we need to prove that $\mathfrak{s}_{\delta}\not\subset \mathfrak{h}$. If this was not the case then $\mathfrak{a}$ would be  orthogonal to $\mathfrak{g}_{\delta}$. Hence $\mathfrak{g}_{\delta}\subset \mathfrak{h}$ which contradicts again the fact that $\mathfrak{g}_{\delta}$ is paired with $\mathfrak{g}_{0}$.

\end{proof}

\subsection{The isotropy group is big}
From now and until the end we will suppose that  the non-compact semi-simple part $S_{nc}$ of $G$ is locally isomorphic to the Möbius group $\operatorname{SO}(1,n+1)$. In this case the Cartan Lie algebra $\mathfrak{a}$ is one dimensional and we have $\mathfrak{s}_{nc}=\mathfrak{s}_{-\alpha}\oplus\mathfrak{a}\oplus\mathfrak{m}\oplus\mathfrak{s}_{\alpha}$, where $\alpha$ is a positive root, $\mathfrak{a}\cong\mathbb{R}$, $\mathfrak{m}\cong\mathfrak{s}\mathfrak{o}(n)$, and $\mathfrak{s}_{-\alpha}\cong\mathfrak{s}_{\alpha}\cong \mathbb{R}^{n}$. Moreover, $\mathfrak{g}_{\pm\alpha}=\mathfrak{s}_{\pm\alpha}\oplus\mathfrak{n}_{\pm\alpha}$, $\mathfrak{g}_{0}=\mathfrak{a}\oplus\mathfrak{m}\oplus\mathfrak{s}_{c}\oplus \mathfrak{r}_{0}$, $\mathfrak{g}_{\beta}=\mathfrak{n}_{\beta}$ for every $\beta\neq 0,\pm\alpha$ and  $ \mathfrak{r}=\mathfrak{r}_{0}\oplus \bigoplus_{\beta\neq 0}\mathfrak{n}_{\beta}$.

In section~\ref{subsection} we saw that the isotropy group $H$ is contained in the algebraic group $P$ which turn out to be big i.e to contain the connected Lie groups $A$, $ S_{\alpha}$ and $N$. Our next result shows that the group $H$ itself is big:
\begin{prop}
\label{propositionsix}
The Lie algebra $\mathfrak{h}$ contains $\mathfrak{a}\oplus\mathfrak{s}_{\alpha}\oplus \bigoplus_{\beta\neq 0}\mathfrak{n}_{\beta}$.
\end{prop}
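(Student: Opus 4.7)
The plan is to reduce everything to the distortion identities \eqref{equationtrois} and \eqref{equationcinq}, exploiting that any $p\in P$ acting unipotently on $\mathfrak{g}$---in particular every element of $\mathfrak{s}_{\pm\alpha}$ and of $\mathfrak{n}$---is an isometry of $\langle\cdot,\cdot\rangle$. I would first dispose of the degenerate case $\mathfrak{g}_0\subset \mathfrak{h}$: then $\mathfrak{a}\subset \mathfrak{h}$, and since $\mathfrak{h}$ is an ideal of $\mathfrak{p}$ which contains $\mathfrak{s}_\alpha\oplus\mathfrak{n}$ by Fact \ref{Fact9}, bracketing with $\mathfrak{a}$ immediately yields $\mathfrak{s}_\alpha=[\mathfrak{a},\mathfrak{s}_\alpha]\subset \mathfrak{h}$ and $\mathfrak{n}_\beta=[\mathfrak{a},\mathfrak{n}_\beta]\subset \mathfrak{h}$ for every $\beta\neq 0$, exactly as in the proof of Proposition \ref{proposition8}. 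From now on assume $\mathfrak{g}_0$ is non-degenerate, so Proposition \ref{proposition9} applies: $\delta$ is a root and $\mathfrak{s}_\delta\not\subset \mathfrak{h}$. Since the only roots of $\mathfrak{s}_{nc}\cong \mathfrak{so}(1,n+1)$ are $\pm\alpha$ and we want $\mathfrak{s}_\alpha\subset\mathfrak{h}$, the positivity on $\mathfrak{a}^*$ must be chosen so that $\delta=-\alpha$.

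The remainder is a four-step cascade driven by \eqref{equationcinq} together with the weight restriction $\langle\mathfrak{g}_\gamma,\mathfrak{g}_{\gamma'}\rangle=0$ unless $\gamma+\gamma'=\delta$. First, $\mathfrak{a}\subset \mathfrak{h}$: $\mathfrak{a}$ can only pair with $\mathfrak{g}_{-\alpha}=\mathfrak{s}_{-\alpha}\oplus\mathfrak{n}_{-\alpha}$; the orthogonality $\mathfrak{a}\perp\mathfrak{n}_{-\alpha}$ is already in the proof of Proposition \ref{proposition9}, and $\mathfrak{a}\perp\mathfrak{s}_{-\alpha}$ comes from applying \eqref{equationcinq} to $p=y\in\mathfrak{s}_{-\alpha}$ with $u=v=a$ a generator of $\mathfrak{a}$, yielding $2\alpha(a)\langle y,a\rangle=0$. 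Second, $\mathfrak{s}_\alpha\subset \mathfrak{h}$: $\mathfrak{s}_\alpha$ need only be orthogonal to $\mathfrak{g}_{-2\alpha}=\mathfrak{n}_{-2\alpha}$; applying \eqref{equationcinq} with $x\in\mathfrak{s}_\alpha$, $u\in\mathfrak{n}_{-2\alpha}$, $v=a\in\mathfrak{a}$, the bracket $[x,u]$ lies in $\mathfrak{n}_{-\alpha}$ and so is orthogonal to $\mathfrak{a}$ by the previous step, giving $\langle u,x\rangle=0$.

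Third, $\mathfrak{n}_{-\alpha}\subset \mathfrak{h}$: weights force us to test only against $\mathfrak{g}_0=\mathfrak{a}\oplus\mathfrak{m}\oplus\mathfrak{s}_c\oplus\mathfrak{r}_0$; for each of the three non-$\mathfrak{a}$ summands $W$ one takes $y\in\mathfrak{n}_{-\alpha}$, $u\in W$, $v=a$, and checks that $[y,u]$ either vanishes (case $W=\mathfrak{s}_c$, different simple factors) or sits in $\mathfrak{n}_{-\alpha}$, hence is orthogonal to $\mathfrak{a}$, so that \eqref{equationcinq} forces $\langle u,y\rangle=0$. Finally, for $\beta\neq 0,-\alpha$ the space $\mathfrak{n}_\beta$ only has a potentially non-trivial pairing with $\mathfrak{g}_{-\alpha-\beta}$; since $\mathfrak{n}$ is an ideal of $\mathfrak{g}$, the bracket $[y,u]$ with $y\in\mathfrak{n}_\beta$ and $u\in\mathfrak{g}_{-\alpha-\beta}$ sits in $\mathfrak{n}_{-\alpha}\subset\mathfrak{h}$, so testing against $v=a\in\mathfrak{a}$ in \eqref{equationcinq} closes the loop.

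The one genuine subtlety is the initial orthogonality $\mathfrak{a}\perp\mathfrak{s}_{-\alpha}$, which needs the unipotence of an element of $\mathfrak{s}_-$ rather than of the unipotent radical of $\mathfrak{p}$ and tacitly relies on $\mathfrak{s}_{-\alpha}\subset \mathfrak{p}$; once this is in hand, the remaining inclusions propagate formally through the bracket pattern $[\mathfrak{n}_\beta,\mathfrak{g}_{-\alpha-\beta}]\subset\mathfrak{n}_{-\alpha}\subset \mathfrak{h}$, and the ordering of the four steps is essentially forced by the need to first obtain $\mathfrak{a}\subset\mathfrak{h}$ before the $v=a$ tests can be used productively.
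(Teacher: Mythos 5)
The gap is in your first (and only substantive) step, the orthogonality $\mathfrak{a}\perp\mathfrak{s}_{-\alpha}$, and you have flagged it yourself without repairing it: you apply Equation \ref{equationcinq} with $p=y\in\mathfrak{s}_{-\alpha}$, but that identity is only available for elements of $\mathfrak{p}$ with vanishing distortion, and nothing established so far places $\mathfrak{s}_{-\alpha}$ inside $\mathfrak{p}$ --- Fact \ref{Fact9} only yields $(\mathfrak{a}\oplus\mathfrak{s}_{+})\ltimes\mathfrak{n}\subset\mathfrak{p}$. Calling this a ``tacit'' reliance does not discharge it; it is precisely the point where an idea is needed. The paper's proof supplies one: for $x\in\mathfrak{s}_{\delta}$, apply Equation \ref{equationcinq} with $p$ equal to whichever of $x$, $\Theta(x)$ lies in $\mathfrak{s}_{\alpha}\subset\mathfrak{p}$ (hence acts unipotently, so isometrically), obtaining $\langle [x,\Theta(x)],x\rangle=0$; since $[x,\Theta(x)]$ is a nonzero element of the one-dimensional $\mathfrak{a}$, this forces $\mathfrak{a}\perp\mathfrak{s}_{\delta}$. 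Note that this is exactly where the M\"obius hypothesis ($\dim\mathfrak{a}=1$) enters, whereas your version of the step never uses it --- a sign that something is off.

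A second, related problem is the circular assertion $\delta=-\alpha$ at the outset. The positivity on $\mathfrak{a}^{*}$ is already fixed by Fact \ref{Fact9} (it is the choice for which $\mathfrak{s}_{+}\subset\mathfrak{p}$), so the sign of $\delta$ relative to it is a fact to be proved, not a convention; indeed $\delta=-\alpha$ is Corollary \ref{Corolary9}, which the paper deduces \emph{from} Proposition \ref{propositionsix} combined with Proposition \ref{proposition9}. To make your route non-circular you would have to treat both signs (the case $\delta=+\alpha$ is actually easier, since then $\mathfrak{s}_{\delta}\subset\mathfrak{p}$, and it is afterwards excluded by Proposition \ref{proposition9}). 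Finally, your steps 2--4, while correct modulo step 1, are unnecessary detours: once $\mathfrak{a}\subset\mathfrak{h}$ is known, the paper concludes in one line from the fact that $\mathfrak{h}$ is an ideal of $\mathfrak{p}$, via $\mathfrak{s}_{\alpha}\oplus\bigoplus_{\beta\neq 0}\mathfrak{n}_{\beta}=\bigl[\mathfrak{a},\mathfrak{s}_{\alpha}\oplus\bigoplus_{\beta\neq 0}\mathfrak{n}_{\beta}\bigr]\subset[\mathfrak{h},\mathfrak{p}]\subset\mathfrak{h}$.
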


\begin{proof}

We have that $\mathfrak{a}\subset \mathfrak{h}$. Indeed, if $0$ is a degenerate weight then we are done. If not, then  $\delta$ is a root and  $\mathfrak{a}\subset \mathfrak{g}_{0}$ is orthogonal to  every $\mathfrak{g}_{\beta}$ with $\beta\neq \delta$. From the proof of Proposition \ref{proposition9} we know that $\mathfrak{a}$ and $\mathfrak{n}_{\delta}$ are orthogonal. Thus it remains to show that $\mathfrak{a}$ and $\mathfrak{s}_{\delta}$ are orthogonal. For that, let $x\in \mathfrak{s}_{\delta}$ then $\Theta(x)\in \mathfrak{s}_{-\delta}$ and $\left[x,\Theta(x)\right]\neq 0$ in $\mathfrak{a}$. Now  using Equation \ref{equationcinq} and the fact that one of $x$  or $\Theta(x)$ preserve $\left\langle .,.\right\rangle$, we get $\left\langle \operatorname{ad}_{x}(\Theta(x)), x \right\rangle=0$. But $\mathfrak{a}$ is one dimensional so it is orthogonal to $\mathfrak{s}_{\delta}$.\\
To end this proof, we have that $\mathfrak{h}$ is an ideal of $\mathfrak{p}$ and so $$\mathfrak{a}\oplus\mathfrak{s}_{\alpha}\oplus \bigoplus_{\beta\neq 0}\mathfrak{n}_{\beta}=\mathfrak{a}\oplus\left[\mathfrak{a}, \mathfrak{s}_{\alpha}\oplus \bigoplus_{\beta\neq 0}\mathfrak{n}_{\beta}\right]\subset \mathfrak{h}\oplus\left[\mathfrak{h},  \mathfrak{p}\right]\subset \mathfrak{h}.$$
\end{proof}
As a consequence we get:
\begin{cor}
\label{Corolary9}
If $0$ is a non-degenerate weight, then $\delta=-\alpha$.
\end{cor}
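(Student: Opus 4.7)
The plan is to combine the two preceding propositions in an immediate way: Proposition \ref{proposition9} restricts the possible values of $\delta$, and Proposition \ref{propositionsix} rules out all but one of them.

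First, since we are assuming that $0$ is a non-degenerate weight, Proposition \ref{proposition9} applies, so $\delta$ is a root of $(\mathfrak{s},\mathfrak{a})$. In the M\"obius setting the restricted Cartan algebra $\mathfrak{a}$ is one-dimensional and the restricted root system of $\mathfrak{s}_{nc}\cong\mathfrak{so}(1,n+1)$ is of type $A_{1}$, whose only roots are $\pm\alpha$. Hence $\delta\in\{\alpha,-\alpha\}$.

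Second, to eliminate the value $\delta=\alpha$, I would invoke the second assertion of Proposition \ref{proposition9}, namely $\mathfrak{s}_{\delta}\not\subset\mathfrak{h}$, together with Proposition \ref{propositionsix}, which gives the inclusion $\mathfrak{s}_{\alpha}\subset\mathfrak{h}$. If $\delta=\alpha$ these two statements contradict one another, so this case is impossible and we must have $\delta=-\alpha$.

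There is essentially no obstacle here: the whole content is packed into the earlier propositions, and the corollary is a one-line consequence. The only point one has to verify carefully is that the word \emph{root} in Proposition \ref{proposition9} refers to a restricted root of $(\mathfrak{s}_{nc},\mathfrak{a})$, so that the list of candidates for $\delta$ is genuinely reduced to $\{\pm\alpha\}$ in the M\"obius case; this is immediate from the explicit decomposition $\mathfrak{s}_{nc}=\mathfrak{s}_{-\alpha}\oplus\mathfrak{a}\oplus\mathfrak{m}\oplus\mathfrak{s}_{\alpha}$ stated at the beginning of the subsection.
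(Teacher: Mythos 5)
Your proof is correct and is exactly the argument the paper intends: the paper states the corollary as an immediate consequence of Proposition \ref{propositionsix} combined with Proposition \ref{proposition9}, and your two steps (the only roots in the M\"obius case are $\pm\alpha$, and $\mathfrak{s}_{\alpha}\subset\mathfrak{h}$ rules out $\delta=\alpha$ since $\mathfrak{s}_{\delta}\not\subset\mathfrak{h}$) are precisely that derivation.
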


\subsection{A suitable modification of $\mathfrak{g}$}

We will show that $\mathfrak{g}$ admits a suitable modification $\mathfrak{g}'$. This allows us to  considerably simplify the proofs in the next section.  More precisely, we have:
\begin{prop}
\label{propcinq}
The solvable radical decomposes as a direct sum  $\mathfrak{r}=\mathfrak{r}_{1}\oplus \mathfrak{r}_{2}$,  where $\mathfrak{r}_{1}$ is a subalgebra commuting with the semi-simple factor $\mathfrak{s}$ and $\mathfrak{r}_{2}$ is an $\mathfrak{s}-$invariant linear subspace contained in $\mathfrak{h}$. In particular $\mathfrak{g}'=\mathfrak{s}\oplus\mathfrak{r}_{1}$ is a modification of $\mathfrak{g}$.
\end{prop}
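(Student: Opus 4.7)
The plan is to invoke Weyl's complete reducibility for the adjoint action of the semisimple Lie algebra $\mathfrak{s}$ on $\mathfrak{r}$. I would set $\mathfrak{r}_1 := \mathfrak{r}^{\mathfrak{s}}$, the centralizer of $\mathfrak{s}$ in $\mathfrak{r}$ (equivalently, the trivial $\mathfrak{s}$-isotypic component), and let $\mathfrak{r}_2$ be the sum of the non-trivial $\mathfrak{s}$-irreducible submodules, an $\mathfrak{s}$-invariant complement. That $\mathfrak{r}_1$ is a subalgebra is immediate from the Jacobi identity: for $x,y\in\mathfrak{r}_1$ and $z\in\mathfrak{s}$, $[z,[x,y]]=[[z,x],y]+[x,[z,y]]=0$. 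By construction $[\mathfrak{s},\mathfrak{r}_1]=0$ and $\mathfrak{r}_2$ is $\mathfrak{s}$-invariant, so $\mathfrak{g}'=\mathfrak{s}\oplus\mathfrak{r}_1$ is a subalgebra; the modification property reduces to the single substantive claim $\mathfrak{r}_2\subset \mathfrak{h}$.

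To establish this, it suffices to show that any non-trivial irreducible $\mathfrak{s}$-submodule $V\subset\mathfrak{r}$ lies in $\mathfrak{h}$. Since $V=[\mathfrak{s},V]\subset[\mathfrak{s},\mathfrak{r}]\subset\mathfrak{n}$ by Proposition \ref{propo00}, the module $V$ sits inside the nilpotent radical. Consider the $\mathfrak{a}$-weight decomposition $V=\bigoplus_\beta V_\beta$: for $\beta\neq 0$ one has $V_\beta\subset\mathfrak{n}_\beta\subset\mathfrak{h}$ by Proposition \ref{propositionsix}, so the non-zero weight spaces of $V$ are already in $\mathfrak{h}$. The delicate piece is the zero-weight space $V_0\subset\mathfrak{n}_0$. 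When $\mathfrak{s}_{nc}$ acts non-trivially on $V$, I would invoke Lemma \ref{Lemmaimportprime} to write $V=[\mathfrak{a}\oplus\mathfrak{m}\oplus\mathfrak{s}_+,V]$, and then extracting the $\mathfrak{a}$-weight $0$ component yields $V_0=[\mathfrak{m},V_0]+[\mathfrak{s}_\alpha,V_{-\alpha}]$. The second summand is in $[\mathfrak{h},\mathfrak{h}]\subset\mathfrak{h}$ because $\mathfrak{s}_\alpha\subset\mathfrak{h}$ and $V_{-\alpha}\subset\mathfrak{h}$.

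The main obstacle will be to eliminate the $[\mathfrak{m},V_0]$ contribution, and in parallel to treat the case where $\mathfrak{s}_{nc}$ acts trivially on $V$ while $\mathfrak{s}_c$ does not (so $V\subset\mathfrak{r}_0\cap\mathfrak{n}$ carries only the zero $\mathfrak{a}$-weight and the weight-space argument is vacuous). In both situations I expect to exploit the compactness of $\mathfrak{m}$ and $\mathfrak{s}_c$: a compact subgroup preserving the conformal class of $\langle\cdot,\cdot\rangle$ has compact orbit in $\operatorname{Sym}(\mathfrak{g})$, so its multiplicative scaling character is trivial and it sits inside $P$; hence $\mathfrak{h}$ becomes invariant under $\mathfrak{m}$ and $\mathfrak{s}_c$. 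With this invariance, I would run an iteration on the $\mathfrak{m}$-isotypic (respectively $\mathfrak{s}_c$-isotypic) decomposition of $V_0$ (respectively $V$) modulo $V\cap\mathfrak{h}$, and combine it with the identity above to force the quotient $V/(V\cap\mathfrak{h})$ to vanish, concluding $V\subset\mathfrak{h}$ and thus $\mathfrak{r}_2\subset\mathfrak{h}$.
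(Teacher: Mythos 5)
Your construction of the decomposition is fine, and in fact a little cleaner than the paper's: by complete reducibility your $\mathfrak{r}_{2}$ is just $[\mathfrak{s},\mathfrak{r}]$, and the paper isolates precisely your ``single substantive claim'' as Lemma \ref{lemmecinq}, namely $[\mathfrak{s},\mathfrak{r}]=[\mathfrak{s},\mathfrak{n}]\subset\mathfrak{h}$. Your handling of the non-zero weight spaces via Proposition \ref{propositionsix}, and of the term $[\mathfrak{s}_{\alpha},V_{-\alpha}]$ via Lemma \ref{Lemmaimportprime}, also matches the paper. But the argument stops exactly where the difficulty is: the inclusion $[\mathfrak{m},V_{0}]\subset\mathfrak{h}$ and the modules on which only $\mathfrak{s}_{c}$ acts. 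The strategy you sketch for these does not go through. First, having a compact orbit in $\operatorname{Sym}(\mathfrak{g})$ does not place $\mathfrak{m}$ or $\mathfrak{s}_{c}$ inside $P$: membership in $P$ means preserving the conformal class of $\left\langle .,.\right\rangle$, which is precisely what is unknown for these subalgebras (only $(\mathfrak{a}\oplus\mathfrak{s}_{+})\ltimes\mathfrak{n}\subset\mathfrak{p}$ is established), so you cannot conclude that they normalize $\mathfrak{h}$. Second, even granting that $V_{0}/(V_{0}\cap\mathfrak{h})$ is an $\mathfrak{m}$-module, the identity $V_{0}=[\mathfrak{m},V_{0}]+(\mbox{terms in }\mathfrak{h})$ only says that this quotient has no trivial $\mathfrak{m}$-isotypic component; no iteration on isotypic components will force it to vanish.

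The paper closes this gap by a short orthogonality computation for which you already have all the ingredients: elements of $\mathfrak{n}$ act as infinitesimal isometries of $\left\langle .,.\right\rangle$, so Equation \ref{equationcinq} applies with $p\in\mathfrak{n}_{0}$, $u\in\mathfrak{g}_{0}$, $v\in\mathfrak{g}_{\delta}$ and gives $\left\langle [p,u],v\right\rangle=-\left\langle u,[p,v]\right\rangle=0$, because $[p,v]\in\mathfrak{g}_{\delta}\cap\mathfrak{n}=\mathfrak{n}_{\delta}\subset\mathfrak{h}=\ker\left\langle .,.\right\rangle$. Hence $[\mathfrak{n}_{0},\mathfrak{g}_{0}]$ is orthogonal to $\mathfrak{g}_{\delta}$; being contained in $\mathfrak{g}_{0}$, which is orthogonal to every $\mathfrak{g}_{\beta}$ with $\beta\neq\delta$, it lies in the kernel, i.e.\ $[\mathfrak{n}_{0},\mathfrak{g}_{0}]\subset\mathfrak{h}$. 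Since $[\mathfrak{m},V_{0}]\subset[\mathfrak{g}_{0},\mathfrak{n}_{0}]$, and since a nontrivial irreducible module on which $\mathfrak{s}_{nc}$ acts trivially sits in $\mathfrak{n}_{0}$ with $[\mathfrak{s}_{c},V]\subset[\mathfrak{g}_{0},\mathfrak{n}_{0}]$, this single inclusion disposes of both of your remaining cases; combined with your use of Lemma \ref{Lemmaimportprime} it yields $[\mathfrak{s},\mathfrak{n}]\subset\mathfrak{h}$ and the proposition follows.
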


To prove Proposition \ref{propcinq}, we need the following lemma:
\begin{lemm}
\label{lemmecinq}
We have $\left[\mathfrak{s},\mathfrak{n}\right]=\left[\mathfrak{s},\mathfrak{r}\right]\subset \mathfrak{h}$.
\end{lemm}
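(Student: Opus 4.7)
By Proposition \ref{propo00} we already have $[\mathfrak{s},\mathfrak{n}]=[\mathfrak{s},\mathfrak{r}]$, so the content is the inclusion $[\mathfrak{s},\mathfrak{r}]\subset \mathfrak{h}$. Since $[\mathfrak{s},\mathfrak{r}]\subset \mathfrak{n}$ and Proposition \ref{propositionsix} gives $\bigoplus_{\beta\neq 0}\mathfrak{n}_{\beta}\subset \mathfrak{h}$, only the $\mathfrak{a}$-weight-zero component matters. If $0$ is a degenerate weight then $\mathfrak{g}_{0}\subset \mathfrak{h}$ (as in the first lines of the proof of Proposition \ref{proposition8}) and the inclusion is immediate, so I would assume $0$ is non-degenerate, whence $\delta=-\alpha$ by Corollary \ref{Corolary9}. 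The weight-zero part of $[\mathfrak{s},\mathfrak{r}]$ then splits as
\[
[\mathfrak{a},\mathfrak{r}_{0}]+[\mathfrak{m}\oplus \mathfrak{s}_{c},\mathfrak{r}_{0}]+[\mathfrak{s}_{\alpha},\mathfrak{n}_{-\alpha}]+[\mathfrak{s}_{-\alpha},\mathfrak{n}_{\alpha}],
\]
where $[\mathfrak{a},\mathfrak{r}_{0}]=0$ and $[\mathfrak{s}_{\alpha},\mathfrak{n}_{-\alpha}]\subset [\mathfrak{h},\mathfrak{h}]\subset \mathfrak{h}$ by Proposition \ref{propositionsix}.

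For $[\mathfrak{s}_{-\alpha},\mathfrak{n}_{\alpha}]$ I would invoke (the proof of) Lemma \ref{Lemmaimportprime}: given $x\in \mathfrak{s}_{-\alpha}$, choose $a\in \mathfrak{a}$ so that $\{x,a,\Theta(x)\}$ is a standard $\mathfrak{sl}(2,\mathbb{R})$-triple; then $[x,\mathfrak{n}]\subset \operatorname{Vect}([a,\mathfrak{n}]+[\Theta(x),\mathfrak{n}])$, and projecting to weight zero gives $[x,\mathfrak{n}_{\alpha}]\subset [\Theta(x),\mathfrak{n}_{-\alpha}]\subset [\mathfrak{h},\mathfrak{h}]\subset \mathfrak{h}$, since both $\Theta(x)\in \mathfrak{s}_{\alpha}$ and $\mathfrak{n}_{-\alpha}$ are in $\mathfrak{h}$. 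For $[\mathfrak{m},\mathfrak{r}_{0}]$, the identity $[\mathfrak{s}_{\alpha},\mathfrak{s}_{-\alpha}]=\mathfrak{a}\oplus \mathfrak{m}$ allows me to write any $m\in \mathfrak{m}$ as a sum of terms $[u,v]-a$ with $u\in\mathfrak{s}_{\alpha}$, $v\in\mathfrak{s}_{-\alpha}$, $a\in\mathfrak{a}$. For $X\in \mathfrak{r}_{0}$, Jacobi (and $[a,X]=0$) yields $[m,X]=[u,[v,X]]-[v,[u,X]]$; the first summand lies in $[\mathfrak{s}_{\alpha},\mathfrak{n}_{-\alpha}]\subset \mathfrak{h}$ and the second in $[\mathfrak{s}_{-\alpha},\mathfrak{n}_{\alpha}]\subset \mathfrak{h}$ by the previous step.

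For $[\mathfrak{s}_{c},\mathfrak{r}_{0}]$ I decompose $\mathfrak{r}=\mathfrak{r}^{\mathfrak{s}_{nc}}\oplus \mathfrak{r}'$ as an $\mathfrak{s}_{nc}$-module, where $\mathfrak{r}'$ is the sum of the non-trivial isotypic components (and lies in $\mathfrak{n}$). For each irreducible component $W\subset \mathfrak{r}'$, a combination of Lemma \ref{Lemmaimportprime} and the Jacobi identity used above yields $W_{0}\subset [\mathfrak{s}_{\alpha},W_{-\alpha}]$; consequently any $X\in \mathfrak{r}'_{0}$ is a sum $\sum_{i}[u_{i},Z_{i}]$ with $u_{i}\in \mathfrak{s}_{\alpha}$ and $Z_{i}\in \mathfrak{n}_{-\alpha}$. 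Using $[\mathfrak{s}_{c},\mathfrak{s}_{nc}]=0$, I get $[s,X]=\sum_{i}[u_{i},[s,Z_{i}]]\in [\mathfrak{h},\mathfrak{h}]\subset \mathfrak{h}$ for $s\in \mathfrak{s}_{c}$. The remaining case $X\in \mathfrak{r}^{\mathfrak{s}_{nc}}$ reduces, via $[\mathfrak{s}_{c},\mathfrak{r}^{\mathfrak{s}_{nc}}]\subset \mathfrak{r}^{\mathfrak{s}_{nc}}\cap \mathfrak{n}=\mathfrak{n}^{\mathfrak{s}_{nc}}$, to proving $\mathfrak{n}^{\mathfrak{s}_{nc}}\subset \mathfrak{h}$.

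This last inclusion is, in my view, the main obstacle. Take $Y\in \mathfrak{n}^{\mathfrak{s}_{nc}}\subset \mathfrak{n}_{0}\subset \mathfrak{g}_{0}$. Because $\delta=-\alpha$ forces $\mathfrak{g}_{0}$ to pair only with $\mathfrak{g}_{-\alpha}=\mathfrak{s}_{-\alpha}\oplus \mathfrak{n}_{-\alpha}$, and $\mathfrak{n}_{-\alpha}\subset \mathfrak{h}$, showing $Y\in \mathfrak{h}$ reduces to $\langle Y,w\rangle=0$ for every $w\in \mathfrak{s}_{-\alpha}$. Such a $w$ is nilpotent and thus preserves $\langle\cdot,\cdot\rangle$; applying Equation \ref{equationcinq} with $a\in \mathfrak{a}$ satisfying $\alpha(a)\neq 0$ gives
\[
0=\langle [w,Y],a\rangle+\langle Y,[w,a]\rangle=\alpha(a)\,\langle Y,w\rangle,
\]
using $[w,Y]=0$ and $[w,a]=\alpha(a)\,w$. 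Hence $\langle Y,w\rangle=0$, which finishes the argument.
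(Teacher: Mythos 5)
Your proof is correct, but it takes a genuinely different route from the paper's. The paper's argument hinges on one short orthogonality computation: since $\mathfrak{n}$ preserves $\left\langle .,.\right\rangle$, Equation \ref{equationcinq} applied with $p\in \mathfrak{n}_{0}$, $u\in \mathfrak{g}_{0}$, $v\in \mathfrak{g}_{\delta}$, together with $\operatorname{ad}_{p}(v)\in \mathfrak{g}_{\delta}\cap\mathfrak{n}=\mathfrak{n}_{\delta}\subset\mathfrak{h}$, shows that $\left[\mathfrak{n}_{0},\mathfrak{g}_{0}\right]$ is orthogonal to $\mathfrak{g}_{\delta}$, hence to all of $\mathfrak{g}$, hence lies in $\mathfrak{h}$. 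Combined with Proposition \ref{propositionsix} this gives $\left[\mathfrak{n},\mathfrak{g}_{0}\right]\subset\mathfrak{h}$ in one stroke, which disposes of $\left[\mathfrak{s}_{c},\mathfrak{n}\right]$ and $\left[\mathfrak{m},\mathfrak{n}\right]$ simultaneously, and Lemma \ref{Lemmaimportprime} then reduces $\left[\mathfrak{s}_{nc},\mathfrak{n}\right]$ to $\left[\mathfrak{a}\oplus\mathfrak{m}\oplus\mathfrak{s}_{\alpha},\mathfrak{n}\right]\subset\left[\mathfrak{h},\mathfrak{p}\right]+\left[\mathfrak{g}_{0},\mathfrak{n}\right]\subset\mathfrak{h}$. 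You instead push everything algebraically into $\left[\mathfrak{s}_{\alpha},\mathfrak{n}_{-\alpha}\right]\subset\left[\mathfrak{h},\mathfrak{h}\right]$ via the graded form of Lemma \ref{Lemmaimportprime} and the Jacobi trick writing $\mathfrak{m}$ out of $\left[\mathfrak{s}_{\alpha},\mathfrak{s}_{-\alpha}\right]$, and you are left with a residual piece $\mathfrak{n}^{\mathfrak{s}_{nc}}$ that you kill by your own orthogonality argument (pairing $\mathfrak{g}_{0}$ only with $\mathfrak{g}_{-\alpha}$ and using that $\mathfrak{s}_{-\alpha}$ preserves the metric --- a fact the paper also uses elsewhere, so it is fair game). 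Both routes rest on the same three pillars, namely Proposition \ref{propositionsix}, $\left[\mathfrak{h},\mathfrak{p}\right]\subset\mathfrak{h}$, and Lemma \ref{Lemmaimportprime}; the paper's single use of metric-preservation by $\mathfrak{n}_{0}$ buys it the whole of your case analysis on $\mathfrak{r}=\mathfrak{r}^{\mathfrak{s}_{nc}}\oplus\mathfrak{r}'$. The one step you should expand is the claim $W_{0}\subset\left[\mathfrak{s}_{\alpha},W_{-\alpha}\right]$ for a nontrivial irreducible component $W$ of $\mathfrak{r}'$: it is true, but as written it is too compressed to check. The chain you need is $W_{0}=\left[\mathfrak{m},W_{0}\right]+\left[\mathfrak{s}_{\alpha},W_{-\alpha}\right]$ (the weight-zero part of Lemma \ref{Lemmaimportprime} applied to $W=\left[\mathfrak{s}_{nc},W\right]$), then $\left[\mathfrak{m},W_{0}\right]\subset\left[\mathfrak{s}_{\alpha},W_{-\alpha}\right]+\left[\mathfrak{s}_{-\alpha},W_{\alpha}\right]$ by your Jacobi trick, and finally $\left[\mathfrak{s}_{-\alpha},W_{\alpha}\right]\subset\left[\mathfrak{s}_{\alpha},W_{-\alpha}\right]$ by the same graded $\mathfrak{sl}(2,\mathbb{R})$ argument you used for $\left[\mathfrak{s}_{-\alpha},\mathfrak{n}_{\alpha}\right]$.
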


\noindent\textbf{Proof of Lemma \ref{lemmecinq}}. 
First we prove that $\left[\mathfrak{n}, \mathfrak{g}_{0}\right]\subset \mathfrak{h}$. For this, note that by the Jacobi identity and the fact that $\mathfrak{n}$ is an ideal of $\mathfrak{g}$, we have $\left[\bigoplus_{\beta\neq 0}\mathfrak{n}_{\beta}, \mathfrak{g}_{0}\right]=\bigoplus_{\beta\neq 0}\mathfrak{n}_{\beta}$ which in turn is a subset of $\mathfrak{h}$ by Proposition \ref{propositionsix}. Thus one  need to prove that $\left[\mathfrak{n}_{0}, \mathfrak{g}_{0}\right]\subset \mathfrak{h}$. We know that $\mathfrak{n}$ preserve the metric $\left\langle .,.\right\rangle$. So using Equation \ref{equationcinq} for $p\in \mathfrak{n}_{0}$, $u\in \mathfrak{g}_{0}$ and $v\in \mathfrak{g}_{\delta}$ gives us: $\left\langle\operatorname{ad}_{p}(u),v\right\rangle + \left\langle u, \operatorname{ad}_{p}(v)\right\rangle=0$. But once again by Jacobi identity, the fact that $\mathfrak{n}$ is an ideal of $\mathfrak{g}$ and  Proposition \ref{propositionsix} we have $\operatorname{ad}_{p}(v)\in \mathfrak{g}_{\delta}\cap \mathfrak{n}=\mathfrak{n}_{\delta}\subset \mathfrak{h}$. So $\left\langle\operatorname{ad}_{p}(u),v\right\rangle=0$, which means that  $\left[\mathfrak{n}_{0}, \mathfrak{g}_{0}\right]$ is orthogonal to $\mathfrak{g}_{\delta}$. Using the fact that $\left[\mathfrak{n}_{0}, \mathfrak{g}_{0}\right]\subset \mathfrak{g}_{0}$ and that $\mathfrak{g}_{0}$ is orthogonal to every $\mathfrak{g}_{\beta}$ for $\beta\neq \delta$ we get that $\left[\mathfrak{n}_{0}, \mathfrak{g}_{0}\right]\subset \mathfrak{h}$.\\

Next we have that $\mathfrak{s}_{c}\subset \mathfrak{g}_{0}$ thus $\left[\mathfrak{s}_{c},\mathfrak{n}\right]\subset \left[\mathfrak{g}_{0},\mathfrak{n}\right]\subset \mathfrak{h}$.\\

Finally we finish by proving that $\left[\mathfrak{s}_{nc},\mathfrak{n}\right]\subset \mathfrak{h}$. On the one hand we have, $$\left[\mathfrak{a}\oplus\mathfrak{s}_{\alpha}\oplus \mathfrak{m} ,\mathfrak{n}\right]\subset \left[\mathfrak{a}\oplus\mathfrak{s}_{\alpha}, \mathfrak{n}\right]+\left[\mathfrak{g}_{0},\mathfrak{n}\right]\subset \mathfrak{h}+\mathfrak{h}\subset \mathfrak{h}.$$
On the other hand, as $\mathfrak{s}_{nc}$ is semi-simple we have  by  Lemma \ref{Lemmaimportprime} that $\left[\mathfrak{s}_{nc},\mathfrak{n}\right]\subset \operatorname{Vect}\left(\left[\mathfrak{a}\oplus \mathfrak{m}\oplus \mathfrak{s}_{\alpha},\mathfrak{n}\right]\right)\subset \mathfrak{h}$.\\
\\
\noindent\textbf{Proof of Proposition \ref{propcinq}}.
The subalgebra $\left[\mathfrak{s},\mathfrak{n}\right]=\left[\mathfrak{s},\mathfrak{r}\right]$ is $\mathfrak{s}-$invariant, so it admits an $\mathfrak{s}-$invariant supplementary subspace $\mathfrak{r}_{1}'$ in $\mathfrak{r}$. But $\mathfrak{s}$ acts trivially on $\mathfrak{r}/\left[\mathfrak{s},\mathfrak{n}\right]$ and thus it acts trivially on $\mathfrak{r}_{1}'$. We take $\mathfrak{r}_{1}$ to be the $\mathfrak{s}-$invariant subalgebra generated by $\mathfrak{r}_{1}'$ (in fact the action of $\mathfrak{s}$ on $\mathfrak{r}_{1}$ is trivial).

It is clear that $\mathfrak{r}_{1}$ is a direct sum of $\mathfrak{r}_{1}'$ and  $\mathfrak{r}_{1}''$: an $\mathfrak{s}-$invariant subspace of  $\left[\mathfrak{s},\mathfrak{n}\right]$. Consider $\mathfrak{r}_{2}$ to be the supplementary of $\mathfrak{r}_{1}''$ in $\left[\mathfrak{s},\mathfrak{n}\right]=\left[\mathfrak{s},\mathfrak{r}\right]$. It is $\mathfrak{s}-$invariant and by Lemma \ref{lemmecinq} we have  $\mathfrak{r}_{2}\subset \mathfrak{h}$.

\section{The Möbius conformal group: a classification theorem}
This section is devoted to prove  Theorem \ref{theoremdeux}. We distinguish two situations: when $\mathfrak{m}$ is contained in $\mathfrak{h}$ and  when it is not. In this last one, we first consider the case where only the non-compact  semi-simple part $S_{nc}$ is non trivial. Then deduce from it the general case. From now and until the end we will assume, up to modification, that $\mathfrak{g}=\mathfrak{s}\oplus\mathfrak{r}_{1}$.

\subsection{The Frances-Melnick case}
We suppose that the sub-algebra $\mathfrak{m}$ is contained in $\mathfrak{h}$. Then we have the following proposition:
\begin{prop}
$M$ is conformally equivalent to the standard sphere $\mathbb{S}^{n}$ or  the Einstein universe $\operatorname{Ein}^{1,1}$.
\end{prop}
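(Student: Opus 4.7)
The plan is to apply the Frances--Melnick conformal flatness criterion \cite[Theorem~1.4]{CharlesMelnick} to a unipotent element of $\mathfrak{s}_\alpha$, and then to identify the resulting homogeneous conformally flat model.

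First, combining the hypothesis $\mathfrak{m} \subset \mathfrak{h}$ with Proposition~\ref{propositionsix}, the minimal parabolic subalgebra $\mathfrak{p}_{nc} := \mathfrak{m} \oplus \mathfrak{a} \oplus \mathfrak{s}_\alpha$ of $\mathfrak{s}_{nc}$ lies entirely in $\mathfrak{h}$. Since we have modified so that $\mathfrak{g} = \mathfrak{s} \oplus \mathfrak{r}_1$ with $\mathfrak{r}_1$ commuting with $\mathfrak{s}$, the zero-weight space is $\mathfrak{g}_0 = \mathfrak{a} \oplus \mathfrak{m} \oplus \mathfrak{s}_c \oplus \mathfrak{r}_1$ and the only non-zero weight spaces are $\mathfrak{g}_{\pm\alpha} = \mathfrak{s}_{\pm\alpha}$.

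Next I pick any non-zero $X \in \mathfrak{s}_\alpha$ and verify that $\operatorname{ad}(X)$ sends $\mathfrak{g}$ into $\mathfrak{h}$. Indeed, $[X, \mathfrak{s}_{-\alpha}] \subset \mathfrak{a} \oplus \mathfrak{m} \subset \mathfrak{h}$ by the structure of $\mathfrak{so}(1,n+1)$; $[X, \mathfrak{g}_0] \subset \mathfrak{s}_\alpha \subset \mathfrak{h}$, using that $[\mathfrak{s}_\alpha, \mathfrak{s}_c] = [\mathfrak{s}_\alpha, \mathfrak{r}_1] = 0$ because $\mathfrak{s}_c$ is a distinct semi-simple factor and $\mathfrak{r}_1$ commutes with all of $\mathfrak{s}$; finally $[X, \mathfrak{s}_\alpha] = 0$ since $2\alpha$ is not a root of $\mathfrak{so}(1,n+1)$. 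Hence $d_1 e^X$ acts as the identity on $\mathfrak{g}/\mathfrak{h} \cong T_x M$, while $e^X$ itself is a non-trivial conformal diffeomorphism fixing $x$. The Frances--Melnick theorem then yields that $M$ is conformally flat, so $M$ admits an $(\operatorname{O}(p+1,q+1),\operatorname{Ein}^{p,q})$-structure for suitable $(p,q)$.

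It remains to identify the model. The $S_{nc}$-orbit through $x$ is $S_{nc}/(S_{nc}\cap H)$, and $S_{nc}\cap H \supset P_{nc}$, so this orbit is a compact quotient of the round sphere $\mathbb{S}^n = S_{nc}/P_{nc}$. If this orbit coincides with $M$, then up to finite cover the stabilizer equals $P_{nc}$, the induced conformal structure is Riemannian, and $M$ is conformally equivalent to $\mathbb{S}^n$. Otherwise, extra tangential directions come from $\mathfrak{s}_c \oplus \mathfrak{r}_1$; combining the distortion identity~\eqref{equationtrois}, the pairing analysis of Propositions~\ref{proposition8} and~\ref{proposition9}, and Lemma~\ref{Lemmaimport} to rule out non-trivial compact $S_{nc}$-orbits inside the relevant linear representations for higher signatures, one forces $n=1$ and $M = \operatorname{Ein}^{1,1}$, possibly up to cover. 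The main obstacle is precisely this last step: conformal flatness is quick once the bracket identity is in place, but showing that the only compatible Einstein universes are $\operatorname{Ein}^{0,n} = \mathbb{S}^n$ and $\operatorname{Ein}^{1,1}$, ruling out intermediate signatures such as $\operatorname{Ein}^{1,k}$ with $k\geq 2$, requires carefully tracking how $\operatorname{SO}(1,n+1)$ can embed in $\operatorname{O}(p+1,q+1)$ so that its stabilizer contains $\mathfrak{p}_{nc}$ and the action stays transitive on the whole $M$.
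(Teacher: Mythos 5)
Your first step is sound and in fact streamlines the paper's route to conformal flatness: since $\mathfrak{m}\subset\mathfrak{h}$ by hypothesis and $\mathfrak{a}\oplus\mathfrak{s}_{\alpha}\subset\mathfrak{h}$ by Proposition \ref{propositionsix}, while $\mathfrak{s}_{c}$ and $\mathfrak{r}_{1}$ commute with $\mathfrak{s}_{nc}$ after the modification, one does get $[X,\mathfrak{g}]\subset\mathfrak{h}$ for $X\in\mathfrak{s}_{\alpha}$, so \cite[Theorem~1.4]{CharlesMelnick} applies whether or not $\mathfrak{g}_{0}\subset\mathfrak{h}$ (the paper invokes this mechanism, via Proposition \ref{proposition8}, only in the degenerate case). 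The identification with $\mathbb{S}^{n}$ when the $S_{nc}$-orbit is everything is also essentially the paper's argument.

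The gap is in the second branch, and you have flagged it yourself: you never actually prove that $n=1$ when $\mathfrak{g}_{0}\not\subset\mathfrak{h}$. The tools you list do not deliver this. Lemma \ref{Lemmaimport} concerns compact $S_{nc}$-orbits in linear representations and says nothing about which signatures can occur, and classifying embeddings of $\operatorname{SO}(1,n+1)$ into $\operatorname{O}(p+1,q+1)$ is not the mechanism. The argument that is needed is short but specific. Since $0$ is a non-degenerate weight, $\mathfrak{g}_{-\alpha}=\mathfrak{g}_{\delta}$ is paired with $\mathfrak{g}_{0}$; as $\mathfrak{a}$, $\mathfrak{m}$ and $\mathfrak{n}_{\delta}$ lie in $\mathfrak{h}$, which is the kernel of $\langle\cdot,\cdot\rangle$, the pairing is really between $\mathfrak{s}_{-\alpha}$ and $\mathfrak{s}_{c}\oplus(\mathfrak{r}_{0}\cap\mathfrak{r}_{1})$. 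Now $\mathfrak{m}$ preserves $\langle\cdot,\cdot\rangle$ (Equation \ref{equationcinq}) and acts trivially on $\mathfrak{s}_{c}\oplus\mathfrak{r}_{1}$, hence $\langle[\mathfrak{m},v],w\rangle=0$ for all $v\in\mathfrak{s}_{-\alpha}$ and $w\in\mathfrak{s}_{c}\oplus(\mathfrak{r}_{0}\cap\mathfrak{r}_{1})$; for $n\geq 2$ one has $[\mathfrak{m},v]=\mathfrak{s}_{-\alpha}$ for every $v\neq 0$, so the pairing vanishes identically, contradicting non-degeneracy of the weight $0$. This forces $n=1$; signature considerations then show the projection of $\mathfrak{s}_{c}\oplus(\mathfrak{r}_{0}\cap\mathfrak{r}_{1})$ to $\mathfrak{g}/\mathfrak{h}$ is one-dimensional, giving $\mathfrak{g}=\mathfrak{so}(1,2)\oplus\mathbb{R}=\mathfrak{u}(1,1)$, $\mathfrak{h}=\mathfrak{a}\oplus\mathfrak{s}_{\alpha}$, and $M$ conformally equivalent to $\operatorname{Ein}^{1,1}$ up to cover. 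Without this $\mathfrak{so}(n)$-equivariance argument your proof does not exclude the intermediate models you worry about.
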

\begin{proof}
Assume first that  $\mathfrak{g}_{0}$ is contained in $\mathfrak{h}$. Then by Proposition \ref{proposition8}, $M$ is conformally flat and after modification, $\mathfrak{r}=0$. Moreover, $\mathfrak{g}/\mathfrak{h}\cong \mathfrak{s}_{-\alpha}$. This is because $\mathfrak{g}_{0}=\mathfrak{a}\oplus \mathfrak{m}\oplus\mathfrak{s}_{c}$ and $\left[\mathfrak{m}, X\right]=\mathfrak{s}_{-\alpha}$ for every $X\neq 0$ in $\mathfrak{s}_{-\alpha}$. Thus $M$ is conformally equivalent to  $\operatorname{SO}(1,n+1)/\operatorname{CO}(n)\ltimes \mathbb{R}^{n}\cong \mathbb{S}^{n}$.

Now suppose that $\mathfrak{g}_{0}$ is not in $\mathfrak{h}$. In this case $\mathfrak{g}_{-\alpha}=\mathfrak{g}_{\delta}$ is paired with $\mathfrak{g}_{0}$. But $\mathfrak{a}$, $\mathfrak{m}$ and $\mathfrak{n}_{\delta}$ are contained in $\mathfrak{h}$ so $\mathfrak{s}_{-\alpha}$ is paired with $\mathfrak{s}_{c}\oplus \left(\mathfrak{r}_{0}\cap \mathfrak{r}_{1}\right)$. Note that $\mathfrak{m}$ acts  on $\mathfrak{s}_{-\alpha}\oplus\left(\mathfrak{s}_{c}\oplus \left(\mathfrak{r}_{0}\cap \mathfrak{r}_{1}\right)\right)$ by preserving the pairing (in fact the action of $\mathfrak{m}$ preserves the metric $\left\langle .,.\right\rangle$). On the contrary for $n\geq 2$,  $\mathfrak{m}\cong \mathfrak{so}(n)$ acts trivially on $\mathfrak{r}_{0}\cap \mathfrak{r}_{1}$ and transitively on $\mathfrak{s}_{-\alpha}-\left\{0\right\}$, so $n=1$. As the metric is of type $(p,q)$, we conclude that the projection of $\mathfrak{s}_{c}\oplus \left(\mathfrak{r}_{0}\cap \mathfrak{r}_{1}\right)$ on $\mathfrak{g}/\mathfrak{h}$ is $\cong \mathbb{R}$. Thus, after modification $\mathfrak{g}=\mathfrak{so}(1,2)\oplus \mathbb{R}=\mathfrak{u}(1,1)$, $\mathfrak{h}=\mathfrak{a}\oplus \mathfrak{s}_{\alpha}=\mathbb{R}\oplus\mathbb{R}$ and hence $M$ is, up to cover, conformally equivalent to $\operatorname{Ein}^{1,1}$.

\end{proof}
\subsection{The non-compact semi-simple case}
Here we suppose that $\mathfrak{m}$ is not contained in $\mathfrak{h}$, the compact semi-simple part $\mathfrak{s}_{c}$  and the radical solvable part $\mathfrak{r}_{1}$ are both trivial. We will show:
\begin{prop}
\label{proposition89}
The pseudo-Riemannian manifold $M$ is conformally equivalent to $\operatorname{Ein}^{3,3}$
\end{prop}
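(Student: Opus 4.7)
The plan is to determine $\mathfrak{h}$ completely and show $n=3$, after which $M\cong \operatorname{Ein}^{3,3}$ (up to finite cover) will follow from the Iwasawa decomposition. First, since $\mathfrak{m}\subset \mathfrak{g}_0$ and $\mathfrak{m}\not\subset \mathfrak{h}$, the weight $0$ is non-degenerate. Because $\mathfrak{s}_\alpha\subset \mathfrak{h}$ by Proposition~\ref{propositionsix}, Proposition~\ref{proposition9} excludes $\delta=\alpha$, so Corollary~\ref{Corolary9} forces $\delta=-\alpha$. Equation~(\ref{equationtrois}) then concentrates the bilinear form $\langle\cdot,\cdot\rangle$ in $\mathfrak{g}_0\times \mathfrak{g}_{-\alpha}$, and the one-dimensionality-of-$\mathfrak{a}$ argument from the proof of Proposition~\ref{propositionsix} gives $\langle \mathfrak{a},\mathfrak{s}_{-\alpha}\rangle=0$; the only non-trivial part of the form is thus a bilinear map $B\colon \mathfrak{m}\times \mathfrak{s}_{-\alpha}\to \mathbb{R}$.

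Decomposing $\mathfrak{h}=\mathfrak{a}\oplus \mathfrak{s}_\alpha\oplus \mathfrak{m}'\oplus \mathfrak{s}_{-\alpha}'$ by $\mathfrak{a}$-weights, I would next show $\mathfrak{s}_{-\alpha}'=0$. Closure of $\mathfrak{h}$ under the bracket forces $\pi_{\mathfrak{m}}[\mathfrak{s}_\alpha,\mathfrak{s}_{-\alpha}']\subset \mathfrak{m}'$; in a standard basis $\{e_i,f_i,m_{ij}\}$ of $\mathfrak{so}(1,n+1)$ with $\pi_{\mathfrak{m}}[e_i,f_j]=m_{ij}$, a single non-zero $f_1\in \mathfrak{s}_{-\alpha}'$ would yield $\{m_{i1}\}_{i\geq 2}\subset \mathfrak{m}'$, whose brackets $[m_{i1},m_{j1}]=-m_{ij}$ then generate the whole $\mathfrak{so}(n)$, contradicting $\mathfrak{m}\not\subset \mathfrak{h}$. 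So $\mathfrak{h}=\mathfrak{a}\oplus \mathfrak{s}_\alpha\oplus \mathfrak{m}'$, and non-degeneracy of the induced metric on $V=\mathfrak{g}/\mathfrak{h}$ makes $B$ descend to a perfect pairing $\mathfrak{m}/\mathfrak{m}'\times \mathfrak{s}_{-\alpha}\to \mathbb{R}$ with $\dim \mathfrak{m}/\mathfrak{m}'=n$.

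The critical step, and the main obstacle, is to upgrade the $\mathfrak{m}'$-invariance of $B$ (which is automatic since $\mathfrak{m}'\subset \mathfrak{h}\cap \mathfrak{k}$ preserves $\langle\cdot,\cdot\rangle$) to full $\mathfrak{m}$-invariance. The natural route is to argue $\mathfrak{m}\subset \mathfrak{p}$: the algebraic group $P$ contains $\mathfrak{a}\oplus \mathfrak{s}_\alpha$ (Fact~\ref{Fact9}) and normalises $\mathfrak{h}$, while $\mathfrak{p}=\mathfrak{g}$ is ruled out by Proposition~\ref{propositiondeux}; the classification of algebraic subgroups of $\operatorname{SO}(1,n+1)$ lying between the maximal solvable $\mathfrak{a}\oplus \mathfrak{s}_\alpha$ and $\mathfrak{g}$, combined with the $\mathfrak{s}_\alpha$-invariance of $B$ (Equation~(\ref{equationcinq})), should force $\mathfrak{p}$ to be the minimal parabolic $\mathfrak{a}\oplus \mathfrak{m}\oplus \mathfrak{s}_\alpha$. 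Compactness of $M$ then gives $\delta|_{\mathfrak{m}}=0$, hence $\mathfrak{m}$-invariance of $B$. Schur's lemma applied to the irreducible $\mathfrak{so}(n)$-modules $\Lambda^2\mathbb{R}^n$ (adjoint) and $\mathbb{R}^n$ (standard), which are non-isomorphic for $n\neq 3$, then forces $n=3$ and $\mathfrak{m}'=0$.

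Finally, with $\mathfrak{g}=\mathfrak{so}(1,4)$ and $\mathfrak{h}=\mathfrak{a}\oplus \mathfrak{s}_\alpha$ the solvable part of the Iwasawa decomposition $G=KAN$, the homogeneous space is $M=G/H\cong K=\operatorname{SO}(4)=\operatorname{Spin}(4)/\{\pm 1\}=(\mathbb{S}^3\times \mathbb{S}^3)/\{\pm 1\}$, a two-fold quotient of the standard cover of $\operatorname{Ein}^{3,3}$. The invariant metric read off from $B$ via the exceptional isomorphism $\mathfrak{so}(3)\cong \mathbb{R}^3$ is, up to scale, the bi-invariant pseudo-Riemannian metric of signature $(3,3)$ on $\operatorname{SO}(4)$, yielding precisely the conformal structure of $\operatorname{Ein}^{3,3}$.
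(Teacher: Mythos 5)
Your strategy---pin down $\mathfrak{h}$ and the invariant bilinear form entirely at the Lie-algebra level and extract $n=3$ from a Schur-type comparison of $\Lambda^{2}\mathbb{R}^{n}$ with $\mathbb{R}^{n}$---is genuinely different from the paper's, which instead proves conformal flatness by showing the Weyl tensor vanishes (via the conformal invariance identity and the partition of $\mathfrak{g}/\mathfrak{h}$ by the sets $\overline{\Theta(Z_{x})}$ of Proposition \ref{Proposition99}), identifies $M$ with $\operatorname{Ein}^{n,n}$, and then forces $n=3$ by Montgomery's theorem: a maximal compact $\operatorname{SO}(n+1)\subset \operatorname{SO}(1,n+1)$ would have to act transitively on $\mathbb{S}^{n}\times\mathbb{S}^{n}$ through a homomorphism into $\operatorname{SO}(n+1)\times\operatorname{SO}(n+1)$, which is impossible when $\operatorname{SO}(n+1)$ is simple. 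Your opening reductions ($\delta=-\alpha$, $\mathfrak{s}_{-\alpha}\cap\mathfrak{h}=0$ as in Proposition \ref{proposition56}, the form concentrated in a perfect pairing $B$ between $\mathfrak{m}/\mathfrak{m}'$ and $\mathfrak{s}_{-\alpha}$) are correct and would be a legitimate alternative beginning.

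However, the step you yourself flag as critical is a genuine gap, and the route you sketch for it does not close. There is no classification result forcing $\mathfrak{p}$ to be the minimal parabolic: $\mathfrak{a}\oplus\mathfrak{s}_{\alpha}\oplus\mathfrak{m}''$ is a subalgebra of $\mathfrak{so}(1,n+1)$ for \emph{every} subalgebra $\mathfrak{m}''\subset\mathfrak{m}$ normalizing $\mathfrak{m}'$, so the lattice of subalgebras between $\mathfrak{a}\oplus\mathfrak{s}_{\alpha}$ and $\mathfrak{g}$ is large, and nothing in the setup obliges $\mathfrak{p}$ to contain all of $\mathfrak{m}$. (Also, Proposition \ref{propositiondeux} does not rule out $\mathfrak{p}=\mathfrak{g}$---it only says $A$ fails to preserve the \emph{metric}, while $P$ is defined by preserving the conformal class; what rules out $\mathfrak{p}=\mathfrak{g}$ is that $\mathfrak{p}$ normalizes $\mathfrak{h}=\ker\langle\cdot,\cdot\rangle$ and $\mathfrak{g}$ is simple.) Without $\mathfrak{m}$-invariance of $B$ your Schur argument has nothing to bite on; the only invariances actually available are under $\mathfrak{h}$, i.e.\ under $\mathfrak{m}'$ and $\mathfrak{s}_{\alpha}$ via Equation \ref{equationcinq}, and one would have to extract the conclusion from the resulting trilinear identity $B(X\wedge y,z)+B(X\wedge z,y)=0$ directly---a computation you do not carry out. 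A second, smaller gap is the last paragraph: bi-invariant signature-$(3,3)$ metrics on $\operatorname{SO}(4)$ form a one-parameter family up to scale ($-\kappa_{1}\oplus\lambda\kappa_{2}$), of which only $\lambda=1$ is conformally flat, so "the bi-invariant metric'' does not identify the conformal class; you would need either to compute $B$ explicitly or to prove uniqueness of the invariant conformal structure on $G/AN$ compatible with the constraints. This is exactly the work the paper's separate Weyl-tensor proposition performs.
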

By corollary \ref{Corolary9}, $\delta$ is a negative root. In particular $\delta=-\alpha$ and $\mathfrak{g}_{-\alpha}$ is paired with $\mathfrak{g}_{0} $. In addition $\mathfrak{g}=\mathfrak{s}_{-\alpha}\oplus\mathfrak{a}\oplus\mathfrak{m}\oplus\mathfrak{s}_{\alpha}$ and $\mathfrak{a}\oplus \mathfrak{s}_{\alpha} \subset\mathfrak{h}$. We have:
\begin{prop}
\label{proposition56}
The root space $\mathfrak{s}_{\delta}$ does not intersect $\mathfrak{h}$. In particular the metric is of type $(n,n)$.
\end{prop}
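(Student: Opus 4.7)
The plan is to argue by contradiction: assuming $V := \mathfrak{s}_{-\alpha} \cap \mathfrak{h}$ is nonzero, we will deduce that $\mathfrak{m} \subset \mathfrak{h}$, contradicting the standing hypothesis of this subsection. Set $W := \mathfrak{m} \cap \mathfrak{h}$. Because $\mathfrak{a} \subset \mathfrak{h}$ by Proposition~\ref{propositionsix}, the subalgebra $\mathfrak{h}$ is $\operatorname{ad}_{\mathfrak{a}}$-stable; decomposing by $\mathfrak{a}$-weights in $\mathfrak{g} = \mathfrak{s}_{-\alpha} \oplus \mathfrak{a} \oplus \mathfrak{m} \oplus \mathfrak{s}_\alpha$ and using $\mathfrak{s}_\alpha \subset \mathfrak{h}$, we arrive at the clean description $\mathfrak{h} = V \oplus \mathfrak{a} \oplus W \oplus \mathfrak{s}_\alpha$.

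The key algebraic input is the bracket $[\mathfrak{s}_\alpha, \mathfrak{s}_{-\alpha}] \subset \mathfrak{a} \oplus \mathfrak{m}$ in $\mathfrak{so}(1, n+1)$. Under the standard identifications $\mathfrak{s}_{\pm\alpha} \cong \mathbb{R}^n$ and $\mathfrak{m} \cong \Lambda^2 \mathbb{R}^n = \mathfrak{so}(n)$, the $\mathfrak{m}$-component of $[y, x]$ (for $y \in \mathfrak{s}_\alpha$, $x \in \mathfrak{s}_{-\alpha}$) is, up to sign, the wedge $y \wedge x$. Since $\mathfrak{h}$ is a Lie subalgebra containing both $\mathfrak{s}_\alpha$ and $V$, the bracket $[\mathfrak{s}_\alpha, V]$ lies in $\mathfrak{h} \cap (\mathfrak{a} \oplus \mathfrak{m}) = \mathfrak{a} \oplus W$; projecting on $\mathfrak{m}$ yields the crucial inclusion $\mathbb{R}^n \wedge V \subset W$.

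The main technical step---and arguably the only calculation---is to upgrade this to $W \supset \mathfrak{m}$. Pick a unit vector $x_0 \in V$. Then $W \supset x_0 \wedge \mathbb{R}^n = x_0 \wedge x_0^\perp$. Using that $W$ is itself a subalgebra of $\mathfrak{so}(n)$, the standard $\mathfrak{so}(n)$-bracket formula gives $[\, x_0 \wedge b,\, x_0 \wedge c\,] = -\, b \wedge c$ for all $b, c \in x_0^\perp$, so $W$ also contains $\Lambda^2(x_0^\perp)$. Adding the two pieces yields $W \supset (x_0 \wedge x_0^\perp) \oplus \Lambda^2(x_0^\perp) = \Lambda^2 \mathbb{R}^n = \mathfrak{m}$, contradicting $\mathfrak{m} \not\subset \mathfrak{h}$. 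Hence $V = 0$.

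For the signature assertion, observe that once $V = 0$ we have $\mathfrak{g}/\mathfrak{h} = \mathfrak{s}_{-\alpha} \oplus (\mathfrak{m}/W)$. Both summands are $\langle \cdot, \cdot \rangle$-isotropic, since the only nonzero pairing among weight spaces is $\mathfrak{g}_{-\alpha} \leftrightarrow \mathfrak{g}_0$ (recall $\delta = -\alpha$ and $\mathfrak{g}_{-2\alpha} = 0$). The restricted pairing $\mathfrak{s}_{-\alpha} \times (\mathfrak{m}/W) \to \mathbb{R}$ is non-degenerate by construction of $V$ and $W$, so $\dim(\mathfrak{m}/W) = n$ and $\mathfrak{g}/\mathfrak{h}$ is the sum of two dually paired Lagrangians, giving the split signature $(n, n)$.
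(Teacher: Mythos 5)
Your proof is correct, but it reaches the contradiction through a different door than the paper. The paper's argument is a one-line double bracket that stays inside $\mathfrak{s}_{-\alpha}$: for $0\neq X\in\mathfrak{s}_{\delta}\cap\mathfrak{h}$ one has $[[X,\mathfrak{s}_{-\delta}],X]=\mathfrak{s}_{\delta}$, whence $\mathfrak{s}_{\delta}\subset\mathfrak{h}$, which is impossible because $\mathfrak{g}_{\delta}$ is paired with $\mathfrak{g}_{0}$ while $\mathfrak{h}$ is the kernel of $\langle\cdot,\cdot\rangle$. You instead push the brackets into $\mathfrak{m}$: from $x_{0}\in V$ you get $x_{0}\wedge\mathbb{R}^{n}\subset W$, then closing under the $\mathfrak{so}(n)$ bracket gives $\Lambda^{2}(x_{0}^{\perp})\subset W$, hence $\mathfrak{m}\subset\mathfrak{h}$, contradicting the standing hypothesis of this subsection. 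Both arguments rest on the same structural facts about $\mathfrak{so}(1,n+1)$ (essentially the content of the paper's Proposition~\ref{Proposition57}, which in the paper only appears after this proposition); the paper's version is shorter and marginally more self-contained, since it invokes only the pairing of $\mathfrak{g}_{\delta}$ with $\mathfrak{g}_{0}$ rather than $\mathfrak{m}\not\subset\mathfrak{h}$, so it would survive even outside this subsection. On the other hand, your treatment of the signature claim is more complete than the paper's, which offers no argument for it: you correctly exhibit $\mathfrak{g}/\mathfrak{h}$ as two isotropic subspaces in perfect duality, and as a byproduct obtain $\dim(\mathfrak{m}/W)=n$, the codimension statement that reappears in Proposition~\ref{propositionderniere}.
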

\begin{proof}
If it was the case then let $0\neq X\in \mathfrak{s}_{\delta}\cap\mathfrak{h}$. We have $\left[\left[X,\mathfrak{s}_{-\delta}\right],X\right]=\mathfrak{s}_{\delta}$ so $\mathfrak{s}_{\delta}\subset \mathfrak{h}$. This contradicts the fact that $\mathfrak{g}_{\delta}$ is paired with $\mathfrak{g}_{0}$.
\end{proof}

Consider the bracket $\left[.,.\right]:\mathfrak{s}_{\alpha}\times \mathfrak{s}_{-\alpha}\longrightarrow \mathfrak{a}\oplus \mathfrak{m}$. Denote by $.\wedge . :\mathfrak{s}_{\alpha}\times \mathfrak{s}_{-\alpha}\longrightarrow \mathfrak{m}$ and $.\vee . :\mathfrak{s}_{\alpha}\times \mathfrak{s}_{-\alpha}\longrightarrow \mathfrak{a}$ its projections on $\mathfrak{m}$ and $\mathfrak{a}$ respectively. Direct computations give us:
\begin{lemm}\noindent
\label{Lemme57}
\begin{enumerate}
\item $\forall X\in \mathfrak{s}_{\alpha}$, $\forall x\in \mathfrak{s}_{-\alpha}$: $X\vee x=-\Theta(x)\vee \Theta(X)$.
\item $\forall X\in \mathfrak{s}_{\alpha}$, $\forall x\in \mathfrak{s}_{-\alpha}$, $\forall y\in \mathfrak{s}_{-\alpha}$: $\left[X\wedge x, y\right]=\left[X\wedge y, x\right]-\left[\Theta(x)\wedge y, \Theta(X)\right]$
\end{enumerate}
\end{lemm}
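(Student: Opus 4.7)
The plan is to use the two defining features of the rank-one algebra $\mathfrak{s}_{nc}\cong\mathfrak{so}(1,n+1)$: first, the Cartan involution $\Theta$ is a Lie algebra automorphism interchanging $\mathfrak{s}_{\alpha}$ with $\mathfrak{s}_{-\alpha}$ and acting by $\pm\mathrm{Id}$ on each of $\mathfrak{a}$ and $\mathfrak{m}$; second, the restricted root system is reduced to $\pm\alpha$, so $[\mathfrak{s}_{-\alpha},\mathfrak{s}_{-\alpha}]=[\mathfrak{s}_{\alpha},\mathfrak{s}_{\alpha}]=0$. These two facts make both identities accessible through elementary bracket manipulations.

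For identity $(1)$ I would simply apply $\Theta$ to the defining decomposition $[X,x]=X\vee x+X\wedge x$. On one side, $\Theta$ acts diagonally on the $\mathfrak{a}\oplus\mathfrak{m}$ decomposition. On the other, $\Theta([X,x])=[\Theta(X),\Theta(x)]=-[\Theta(x),\Theta(X)]$, which since $\Theta(x)\in\mathfrak{s}_\alpha$ and $\Theta(X)\in\mathfrak{s}_{-\alpha}$ admits its own $\vee$- and $\wedge$-parts. Matching $\mathfrak{a}$-components gives the stated identity, and matching $\mathfrak{m}$-components yields a companion identity $X\wedge x = \pm\,\Theta(x)\wedge\Theta(X)$ that will be useful for $(2)$.

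For identity $(2)$ the plan is to apply the Jacobi identity to two carefully chosen triples, each exploiting the vanishing of one bracket. First, applied to $(X,x,y)$ with $x,y\in\mathfrak{s}_{-\alpha}$, so that $[x,y]=0$, Jacobi collapses to $[[X,x],y]=[[X,y],x]$. Splitting $[X,x]$ and $[X,y]$ into $\vee$- and $\wedge$-parts, and using that elements of $\mathfrak{a}$ act as scalars on $\mathfrak{s}_{-\alpha}$, yields a first relation between $[X\wedge x,y]$ and $[X\wedge y,x]$ modulo explicit scalar multiples of $x$ and $y$. Second, applied to $(\Theta(x),y,\Theta(X))$, now with $y,\Theta(X)\in\mathfrak{s}_{-\alpha}$ so that $[y,\Theta(X)]=0$, Jacobi expresses $[\Theta(x)\wedge y,\Theta(X)]$ in terms of $[[\Theta(x),\Theta(X)],y]$; this latter bracket is rewritten using $[\Theta(x),\Theta(X)]=-\Theta([X,x])$ and the known action of $\Theta$ on $\mathfrak{a}\oplus\mathfrak{m}$, bringing $[X\wedge x,y]$ and $[X\vee x,y]$ back into play.

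Combining the two relations and invoking part $(1)$ to identify the $\mathfrak{a}$-valued coefficients, the scalar multiples of $x$, $y$ and $\Theta(X)$ should cancel pairwise, leaving the desired equality. The main obstacle lies precisely in this final cancellation: each Jacobi identity contributes $\mathfrak{s}_{-\alpha}$-valued ``error'' terms coming from $\mathfrak{a}$-pieces acting on $x$, $y$ and $\Theta(X)$, and one has to check that part $(1)$ is exactly what is needed to annihilate these terms. Once the sign conventions for $\Theta$ on $\mathfrak{a}$ and $\mathfrak{m}$ are fixed, the remainder of the computation is mechanical.
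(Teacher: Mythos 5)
The paper offers no written proof of this lemma (it just says ``direct computations''), so the benchmark is the explicit computation in a matrix model of $\mathfrak{so}(1,n+1)$, where both identities are a two-line check. Your method for $(1)$ is the right one and does work, but you have not actually carried it out: since $\Theta$ is an automorphism equal to $-\mathrm{Id}$ on $\mathfrak{a}$ and $+\mathrm{Id}$ on $\mathfrak{m}$, comparing $\Theta([X,x])$ with $-[\Theta(x),\Theta(X)]$ yields $X\vee x=+\,\Theta(x)\vee\Theta(X)$ and $X\wedge x=-\,\Theta(x)\wedge\Theta(X)$. The $\mathfrak{a}$-component therefore comes out with the \emph{opposite} sign to the printed statement (the $+$ sign is also what is needed later for $\alpha(\cdot\vee\cdot)$ to define a symmetric scalar product), so the claim that ``matching $\mathfrak{a}$-components gives the stated identity'' is not something you can assert without confronting this sign; in the standard realization, with $[X_v,Y_w]=(v^{T}w)H+(vw^{T}-wv^{T})$ and $\Theta(X_v)=Y_{-v}$, one sees directly that $X_v\vee Y_w=\Theta(Y_w)\vee\Theta(X_v)$.

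The genuine gap is in $(2)$. Writing $P=[X\wedge x,y]$, $Q=[X\wedge y,x]$, $R=[\Theta(x)\wedge y,\Theta(X)]$ and using $[a,z]=-\alpha(a)z$ for $z\in\mathfrak{s}_{-\alpha}$, your first Jacobi identity gives $P-Q=\alpha(X\vee x)y-\alpha(X\vee y)x$, and your second one (after rewriting $[\Theta(x),\Theta(X)]$ via part $(1)$ and its $\wedge$-companion) gives $R+P=\alpha(\Theta(x)\vee y)\Theta(X)-\alpha(X\vee x)y$. Adding them, the goal $P-Q+R=0$ is \emph{equivalent} to the identity $[X\wedge x,y]=\alpha(\Theta(x)\vee y)\Theta(X)-\alpha(X\vee y)x$, i.e.\ precisely the explicit formula (stated in the paper right after the lemma) saying that $X\wedge x$ acts on $\mathfrak{s}_{-\alpha}$ as the infinitesimal rotation of the plane spanned by $x$ and $\Theta(X)$. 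That formula is the real content here and it does not follow from your two relations together with $(1)$: every Jacobi identity of the type you invoke (one vector in $\mathfrak{s}_{\alpha}$, two commuting vectors in $\mathfrak{s}_{-\alpha}$) only constrains the antisymmetrization of $(u,v)\mapsto[U\wedge u,v]$ in $(u,v)$, never its symmetric part, so no number of such identities will close the argument. To finish you need an extra input, for instance: Jacobi applied to $(m,Y,y)$ with $m\in\mathfrak{m}$ shows that the pairing $\vee$ is $\mathfrak{m}$-invariant, hence $(y,z)\mapsto\alpha(\Theta(z)\vee[X\wedge x,y])$ is antisymmetric in $(y,z)$; combined with the known antisymmetrization in the other pair, the standard braid argument and the nondegeneracy of $\vee$ then determine $[X\wedge x,y]$ completely. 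Alternatively, and more in the spirit of the paper, verify both identities directly in the matrix realization above.
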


The Cartan involution  identifies $\mathfrak{s}_{\alpha}$ and $\mathfrak{s}_{-\alpha}$, which when identified with $\mathbb{R}^{n}$, $\mathfrak{m}$ acts on them as $\mathfrak{so}(n)$. In this case, the map $.\vee .$ can be seen as a bilinear symmetric map from $\mathbb{R}^{n}\times \mathbb{R}^{n}$ to $\mathbb{R}^{n}$, and when composed with $\alpha$ gives rise to an $\mathfrak{m}-$invariant scalar product $\left\langle .,. \right\rangle_{0}$ on $\mathbb{R}^{n}$. Moreover, by Lemma \ref{Lemme57}, for every $x,X\in \mathbb{R}^{n}$, $X\wedge x$ is the antisymmetric endomorphism of $\mathbb{R}^{n}$ defined by $X\wedge x(y)=\left\langle X,  y\right\rangle_{0}x-
\left\langle x,  y\right\rangle_{0} X $. 

Let $x,X\in \mathbb{R}^{n}$ and consider $P$ the plane generated by $x,X$. Then $X\wedge x$ when seen as element of $\mathfrak{m}\cong\mathfrak{so}(n)$ is the infinitesimal generator 
of a one parameter group acting trivially on the orthogonal $P^{\perp}$ of $P$ with respect to the scalar product 
$\left \langle .,  .\right \rangle_{0}$. Hence $X\wedge x\in \mathfrak{so}(P)$. More generally:

\begin{prop}
\label{Proposition57}
Let $E$ be a linear subspace of $\mathbb{R}^{n}$ and let $x\in E$. Consider $\mathfrak{c}$ the Lie subalgebra of $\mathfrak{so}(n)$ generated by $\left\{X\wedge x / X\in E \right\}$. Then $\mathfrak{c}$ equals the Lie algebra linearly generated by $\left\{ X \wedge X^\prime / X, X^\prime \in E\right\}$, which in turn equals 
$\mathfrak{so}(E)$,  the Lie algebra of orthogonal transformations preserving $E$ and acting trivially on its orthogonal (with respect to $\left \langle .,  .\right \rangle_{0}$).
\end{prop}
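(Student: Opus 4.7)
The plan is to prove the equality of three Lie subalgebras of $\mathfrak{so}(n)$: the subalgebra $\mathfrak{c}$, the linear span $V := \operatorname{Vect}\{X \wedge X' : X, X' \in E\}$, and $\mathfrak{so}(E)$. I would organise the argument as the trivial inclusion $\mathfrak{c} \subset \mathfrak{so}(E)$, then the identity $V = \mathfrak{so}(E)$, and finally the non-trivial inclusion $V \subset \mathfrak{c}$, which is the crux.

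The equality $V = \mathfrak{so}(E)$ is essentially linear algebra. For $X, X' \in E$, the endomorphism $X \wedge X'$ preserves $E$ and kills $E^{\perp}$ with respect to the $\mathfrak{m}$-invariant scalar product $\langle \cdot, \cdot \rangle_0$, so $V \subset \mathfrak{so}(E)$; conversely, fixing an orthonormal basis $e_1, \ldots, e_k$ of $E$, the family $\{e_i \wedge e_j\}_{i<j}$ lies in $V$ and is exactly the standard basis of $\mathfrak{so}(E)$. The inclusion $\mathfrak{c} \subset \mathfrak{so}(E)$ is immediate, since each generator $X \wedge x$ has both $X$ and $x$ in $E$.

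For $V \subset \mathfrak{c}$ I would compute the bracket of two generators by a direct calculation using $(X \wedge x)(y) = \langle X, y\rangle_0 \, x - \langle x, y\rangle_0 \, X$. I expect the identity
\[
[X \wedge x,\, X' \wedge x] \;=\; \|x\|_0^2 \,(X \wedge X') \;+\; \langle X, x\rangle_0 \,(X' \wedge x) \;-\; \langle X', x\rangle_0 \,(X \wedge x),
\]
valid for all $X, X' \in E$. Since $x \neq 0$ (needed for the statement to hold once $\dim E \geq 2$), the scalar $\|x\|_0^2$ is positive, and rearranging expresses $X \wedge X'$ as an element of $\mathfrak{c}$. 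Hence $V \subset \mathfrak{c}$, which closes the chain of equalities $\mathfrak{c} = V = \mathfrak{so}(E)$.

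The only real obstacle is the bookkeeping in the bracket computation: the coefficients $\|x\|_0^2$, $\langle X, x\rangle_0$ and $\langle X', x\rangle_0$ must be tracked carefully so that the $X \wedge X'$ term separates cleanly from the two terms proportional to the generators $X \wedge x$ and $X' \wedge x$. Once the displayed identity is in hand, no further conceptual work is needed and the proposition follows.
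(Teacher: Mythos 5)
Your proposal is correct and follows essentially the same route as the paper: both establish $\mathfrak{c}\subset\mathfrak{so}(E)$ and then recover all of $\mathfrak{so}(E)$ from the generators $X\wedge x$ together with their pairwise brackets, the paper via a dimension count on the family $\left\{X_{i}\wedge x,\ \left[X_{i}\wedge x,X_{j}\wedge x\right]\right\}$ and you via the explicit identity $\left[X\wedge x, X'\wedge x\right]=\|x\|_{0}^{2}\,(X\wedge X')+\langle X,x\rangle_{0}\,(X'\wedge x)-\langle X',x\rangle_{0}\,(X\wedge x)$, which I have checked and which is precisely what underlies the paper's linear-independence claim. Your remark that $x\neq 0$ is needed (and is implicit in the statement) is a worthwhile observation.
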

\begin{proof}
First we have $\mathfrak{c}(E)\subset E$ and hence $\mathfrak{c}\subset \mathfrak{so}(E)$. It is then sufficient to prove that $\mathfrak{c} $ and $\mathfrak{so}(E) $ have the same dimensions. For that let $\left\{x, X_{2},...,X_{k}\right\}$ be a basis of $E$. Note that $\left\{X_{2}\wedge x,...,X_{k}\wedge x, \left[X_{i}\wedge x,X_{j}\wedge x\right],\mbox{~for~} 2\leq i<j \leq k\right\}$ are linearly independent. Thus $\mathfrak{c}=\mathfrak{so}(E)$.
\end{proof}

For every $x\neq 0\in \mathfrak{s}_{-\alpha}$  consider: $$Z_{x}=\left\{ X\neq 0 \in \mathfrak{s}_{\alpha}, \mbox{~such that~} \left[X,x\right]\in \mathfrak{h}\right\}.$$
Denote $\overline{\Theta(Z_{x})}$ the projection of $\Theta(Z_{x})$ in $\mathfrak{g}/\mathfrak{h}$. Then:
\begin{prop}
\label{Proposition99}
The family $\left\{\overline{\Theta(Z_{x})}\backslash\lbrace{0}\rbrace\right\}_{x\in \mathfrak{s}_{-\alpha}}$ form a partition of $\mathfrak{g}/\mathfrak{h}$. More precisely: $$\overline{\Theta(Z_{x})}\cap\overline{\Theta(Z_{y})}=\left\{0\right\}\Longleftrightarrow x\notin \Theta(Z_{y})\Longleftrightarrow y\notin \Theta(Z_{x}).$$

\end{prop}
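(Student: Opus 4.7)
The strategy is to translate everything into the antisymmetric bilinear map $\wedge$ and then exploit a single algebraic identity in $\mathfrak{so}(n)$. Using the Cartan involution to identify $\mathfrak{s}_{\alpha}$ and $\mathfrak{s}_{-\alpha}$ with a common copy $V=\mathbb{R}^{n}$ equipped with the $\mathfrak{m}$-invariant scalar product $\langle\cdot,\cdot\rangle_{0}$, the map $\wedge$ becomes an antisymmetric bilinear map $V\times V\to \mathfrak{so}(n)\cong \mathfrak{m}$. Since $\mathfrak{a}\subset \mathfrak{h}$ by Proposition~\ref{propositionsix}, the defining condition $[X,x]\in \mathfrak{h}$ for membership in $Z_{x}$ reduces to $X\wedge x\in \mathfrak{k}$, where $\mathfrak{k}:=\mathfrak{h}\cap \mathfrak{m}$. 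This $\mathfrak{k}$ is a Lie subalgebra of $\mathfrak{m}$ (actually an ideal, since $\mathfrak{m}\subset \mathfrak{p}$ and $\mathfrak{h}$ is an ideal of $\mathfrak{p}$). After the identification the whole question reduces to showing that the relation $y\sim_{\mathfrak{k}} x$ defined by $y\wedge x\in \mathfrak{k}$ is an equivalence relation on $V\setminus\{0\}$, whose classes are exactly the sets $\Theta(Z_{x})\setminus\{0\}$.

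Two of the three required properties of $\sim_{\mathfrak{k}}$ are immediate. Antisymmetry $y\wedge x=-x\wedge y$ gives symmetry of $\sim_{\mathfrak{k}}$, which is precisely the asserted equivalence $x\notin \Theta(Z_{y})\Longleftrightarrow y\notin \Theta(Z_{x})$. Reflexivity comes from $x\wedge x=0\in \mathfrak{k}$, which yields $x\in \Theta(Z_{x})$ for every nonzero $x$ and, combined with $\mathfrak{s}_{-\alpha}\cap \mathfrak{h}=0$ (Proposition~\ref{proposition56}), shows that the projections $\overline{\Theta(Z_{x})}\setminus\{0\}$ cover the image of $\mathfrak{s}_{-\alpha}\setminus\{0\}$ in $\mathfrak{g}/\mathfrak{h}$.

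The essential step will be transitivity, which I plan to extract from the identity
\[
\|u\|_{0}^{2}\,(v\wedge w)=\bigl[u\wedge v,\,u\wedge w\bigr]+\langle v,u\rangle_{0}\,u\wedge w-\langle w,u\rangle_{0}\,u\wedge v,
\]
valid for all $u,v,w\in V$. Granting this, the argument is quick: if $a\wedge b$ and $b\wedge c$ both lie in $\mathfrak{k}$ and $b\neq 0$, specialise the identity to $u=b$, $v=a$, $w=c$; the bracket term belongs to $\mathfrak{k}$ because $\mathfrak{k}$ is a Lie subalgebra, and the last two terms belong to $\mathfrak{k}$ by linearity, so the right-hand side lies in $\mathfrak{k}$. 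Positivity of $\|b\|_{0}^{2}$ then forces $a\wedge c\in \mathfrak{k}$. In particular, a common nonzero element $z\in \overline{\Theta(Z_{x})}\cap \overline{\Theta(Z_{y})}$ yields $z\sim_{\mathfrak{k}} x$ and $z\sim_{\mathfrak{k}} y$, hence $x\sim_{\mathfrak{k}} y$, giving the first equivalence of the statement. The partition then follows formally from reflexivity, symmetry and transitivity.

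The main obstacle is thus the verification of the key identity. This is a direct but slightly tedious computation starting from $(u\wedge v)(w)=\langle u,w\rangle_{0}\,v-\langle v,w\rangle_{0}\,u$: expanding $[u\wedge v,\,u\wedge w]$ on a test vector produces several terms, which, after careful bookkeeping, regroup into $\|u\|_{0}^{2}\,(v\wedge w)$ plus exactly the correction $-\langle v,u\rangle_{0}\,u\wedge w+\langle w,u\rangle_{0}\,u\wedge v$, equivalently $-\,u\wedge\bigl((v\wedge w)(u)\bigr)$. Once this identity is established, nothing further is needed.
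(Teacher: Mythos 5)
Your proof is correct, and it reaches the conclusion by a route that is close in spirit to the paper's but self-contained at the key step. The identity
\[
\|u\|_{0}^{2}\,(v\wedge w)=\bigl[u\wedge v,\,u\wedge w\bigr]+\langle v,u\rangle_{0}\,u\wedge w-\langle w,u\rangle_{0}\,u\wedge v
\]
does hold with the paper's convention $(u\wedge v)(t)=\langle u,t\rangle_{0}v-\langle v,t\rangle_{0}u$ (expanding the bracket on a test vector, the terms proportional to $u$ cancel and the coefficients of $v$ and $w$ collapse correctly), and the reduction of $[X,x]\in\mathfrak{h}$ to $X\wedge x\in\mathfrak{k}=\mathfrak{h}\cap\mathfrak{m}$ is legitimate since $\mathfrak{a}\subset\mathfrak{h}$ by Proposition~\ref{propositionsix}. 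The paper instead invokes Proposition~\ref{Proposition57}: the Lie algebra generated by $\{X\wedge x : X\in E\}$, for $x\in E$, equals $\mathfrak{so}(E)$ and in particular contains every $X\wedge X'$ with $X,X'\in E$; applied to $E=\Theta(Z_{x})$ this gives $[Z_{x},\Theta(Z_{x})]\subset\mathfrak{h}$ and hence the same dichotomy. Your identity is precisely the quantitative kernel of that generation statement, so what you gain is a transitivity argument that bypasses the basis-and-dimension-count proof of Proposition~\ref{Proposition57} and makes the equivalence-relation structure explicit; what the paper's phrasing buys is the stronger structural fact that $\mathfrak{h}\cap\mathfrak{m}$ actually contains the full $\mathfrak{so}(E)$, which is reused later. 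Two minor points: your parenthetical claim that $\mathfrak{k}$ is an ideal of $\mathfrak{m}$ relies on $\mathfrak{m}\subset\mathfrak{p}$, which is not established at this stage, but you only need that $\mathfrak{k}$ is a subalgebra, which is automatic; and when you take a nonzero element of $\overline{\Theta(Z_{x})}\cap\overline{\Theta(Z_{y})}$ you should say explicitly that $\mathfrak{s}_{-\alpha}\cap\mathfrak{h}=0$ (Proposition~\ref{proposition56}) forces the two lifts in $\mathfrak{s}_{-\alpha}$ to coincide, so a single vector $z$ witnesses both relations.
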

\begin{proof}
By Proposition \ref{Proposition57} we have, $$\left[Z_{x},\Theta(Z_{x})\right]=\mathfrak{a}\oplus\operatorname{alg}\left(\left\{X\wedge x / X\in  Z_{x}\right\}\right)\subset \mathfrak{h}.$$ %This shows that if $y\in \theta(Z_{x})$ then $Z_{x}=Z_{y}$
This implies that  $$\Theta(Z_{x})=\Theta(Z_{y})\Longleftrightarrow x\in \Theta(Z_{y})\Longleftrightarrow y\in \Theta(Z_{x}).$$ 
Hence, the projections $\left\{\overline{\Theta(Z_{x})}\backslash\lbrace{0}\rbrace\right\}_{x\in \mathfrak{s}_{-\alpha}}$ form a partition of $\mathfrak{g}/\mathfrak{h}$.
\end{proof}

Next we prove:
\begin{prop}

The pseudo-Riemannian manifold $M$ is conformally flat. 
\end{prop}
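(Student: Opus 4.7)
Following the model of Proposition~\ref{proposition8}, the plan is to establish conformal flatness by producing a non-trivial element of the isotropy that acts as the identity on $T_{x_0}M = \mathfrak{g}/\mathfrak{h}$, and then invoking the Frances--Melnick rigidity theorem \cite[Theorem~1.4]{CharlesMelnick}. At the Lie algebra level, this amounts to finding a non-zero $X \in \mathfrak{h}$ with $[X,\mathfrak{g}] \subset \mathfrak{h}$, i.e.\ $\operatorname{ad}_X \equiv 0$ on $\mathfrak{g}/\mathfrak{h}$.

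The natural candidates are $X \in \mathfrak{s}_\alpha \subset \mathfrak{h}$. For such $X$ one has automatically $[X,\mathfrak{a}\oplus\mathfrak{m}\oplus\mathfrak{s}_\alpha] \subset \mathfrak{s}_\alpha \subset \mathfrak{h}$; only the bracket with $\mathfrak{s}_{-\alpha}$ is a real constraint. Using the decomposition $[X,x] = (X\vee x)+(X\wedge x) \in \mathfrak{a}\oplus\mathfrak{m}$, the $\mathfrak{a}$-component is always in $\mathfrak{h}$, so the condition $\operatorname{ad}_X \equiv 0$ on $\mathfrak{g}/\mathfrak{h}$ becomes $X\wedge x \in \mathfrak{m}':=\mathfrak{m}\cap\mathfrak{h}$ for every $x \in \mathfrak{s}_{-\alpha}$, equivalently $X \in \bigcap_{x\ne 0}Z_x$. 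The key step is therefore to extract, from the partition described in Proposition~\ref{Proposition99}, a non-zero element in this intersection. If the partition is trivial (a single class), then $Z_x = \mathfrak{s}_\alpha$ for every $x$ and any non-zero $X\in\mathfrak{s}_\alpha$ is a Frances--Melnick witness, yielding immediate flatness.

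When the partition is non-trivial, the explicit formula $X\wedge x(y) = \langle X,y\rangle_0 x - \langle x,y\rangle_0 X$ combined with Proposition~\ref{Proposition57} pins down each $Z_x$: the only possibility compatible with the $\mathfrak{m}'$-constraint and with pairwise disjointness of the $\overline{\Theta(Z_x)}$'s is that $\mathfrak{m}'=0$ and each $Z_x$ is the line $\mathbb{R}\Theta^{-1}(x)$. A dimension count using $\dim(\mathfrak{g}/\mathfrak{h}) = 2n$ from the signature $(n,n)$ statement of Proposition~\ref{proposition56} gives $\dim\mathfrak{m}' = n(n-3)/2$, so $\mathfrak{m}'=0$ forces $n = 3$ and pins down $\mathfrak{h} = \mathfrak{a}\oplus\mathfrak{s}_\alpha$ exactly. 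In this degenerate situation the intersection $\bigcap_x Z_x$ is zero and no element of $\mathfrak{s}_\alpha$ alone has identity derivative; nevertheless, every non-zero $X \in \mathfrak{s}_\alpha$ gives a unipotent, Lagrangian-preserving isotropy transformation $e^{X}$, and these assemble into a rich unipotent sub-family of $H$. Combining this with the $\mathfrak{a}$-weight decomposition $\mathfrak{g}/\mathfrak{h} = V_{-\alpha}\oplus V_0$ (with both summands $n$-dimensional and in non-degenerate pairing), an $H$-invariance analysis of the Weyl-type tensor space at $x_0$ should show that no non-zero $H$-invariant Weyl tensor exists.

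The main obstacle, as anticipated, is precisely this $n=3$ / line-partition case: the naive rigidity element is unavailable, and one must switch from the direct Frances--Melnick identity-derivative criterion to a Weyl-tensor invariance argument, or alternatively to a refined version of the criterion which treats a whole nilpotent sub-family of $H$ with prescribed Jordan type. This is where I expect the majority of the technical work to lie.
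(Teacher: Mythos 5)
Your plan rests on producing a nonzero $X\in\mathfrak{s}_{\alpha}$ with $[X,\mathfrak{g}]\subset\mathfrak{h}$ and then invoking \cite[Theorem~1.4]{CharlesMelnick} as in Proposition~\ref{proposition8}. This witness never exists in the present subsection. Your own reduction shows that such an $X$ must lie in $\bigcap_{x\neq 0}Z_{x}$, i.e.\ $X\wedge x\in\mathfrak{m}\cap\mathfrak{h}$ for all $x\in\mathfrak{s}_{-\alpha}$; but $\mathfrak{m}\cap\mathfrak{h}$ is a subalgebra, and by Proposition~\ref{Proposition57} (applied with $E=\mathbb{R}^{n}$, using the antisymmetry of $\wedge$) the elements $X\wedge x$, $x$ ranging over $\mathfrak{s}_{-\alpha}$, generate all of $\mathfrak{so}(n)$ for any fixed $X\neq 0$. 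Hence $\bigcap_{x\neq 0}Z_{x}\neq 0$ forces $\mathfrak{m}\subset\mathfrak{h}$, which is excluded by the standing hypothesis of this subsection (that case is the Frances--Melnick case, treated separately). So your ``trivial partition'' branch is vacuous, and the entire proof collapses onto the case you explicitly defer (``an $H$-invariance analysis of the Weyl-type tensor space $\ldots$ should show that no non-zero $H$-invariant Weyl tensor exists $\ldots$ this is where I expect the majority of the technical work to lie''). That deferred analysis is not a residual technicality: it is the whole proof. A secondary point: your assertion that a non-trivial partition forces $\mathfrak{m}\cap\mathfrak{h}=0$, each $Z_{x}$ a line, and $n=3$ is not justified (for $n>3$ one has $\dim(\mathfrak{m}\cap\mathfrak{h})=n(n-3)/2>0$ by your own count), and in any case $n=3$ is only obtained later, in the proof of Proposition~\ref{proposition89}, via Montgomery's theorem; the present proposition must be proved for general $n$.

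For comparison, the paper's argument never uses the identity-derivative criterion here. It works directly with the conformal invariance (equivariance) of the Weyl tensor $\operatorname{W}$ (Cotton in dimension $3$): (a) the $\mathfrak{a}$-weight grading of $\mathfrak{g}/\mathfrak{h}=\overline{\mathfrak{s}_{-\alpha}}\oplus\overline{\mathfrak{g}_{0}}$, whose weights are $-\alpha$ and $0$, kills every component of $\operatorname{W}$ with at least two arguments in $\overline{\mathfrak{s}_{-\alpha}}$ (such components would have weight $-2\alpha$ or $-3\alpha$); (b) equivariance under $e^{tX}$, $X\in\mathfrak{s}_{\alpha}\subset\mathfrak{h}$, and under $e^{t\Theta(x)}$ places the remaining components $\operatorname{W}(\bar{x},\overline{m}_{1},\overline{m}_{2})$ inside $\overline{\Theta(Z_{x})}$; and (c) the partition property of Proposition~\ref{Proposition99} then forces these components to vanish as well. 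If you want to salvage your approach, you would have to replace the single unipotent witness by exactly this kind of argument exploiting the whole family $\{e^{tX}\}_{X\in\mathfrak{s}_{\alpha}}$ together with the combinatorics of the sets $Z_{x}$ --- which is what the paper does.
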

\begin{proof}
We need to prove that the Weyl tensor $\operatorname{W}$ (or the Cotton tensor $\operatorname{C}$ if the dimension of $M$ is $3$) vanishes. Actually we will just make use of their conformal invariance property. Namely: if $f$ is a conformal transformation of $M$ then,

\begin{equation}
\label{equationdix}
d_{x}f\operatorname{W}(X,Y,Z)=\operatorname{W}(d_{x}f (X),d_{x}f(Y),d_{x}f(Z))
\end{equation}

We denote by $\bar{x}$ the projection in $\mathfrak{g}/\mathfrak{h}$ of  an element $x\in \mathfrak{g}$. A direct application of Equation \ref{equationdix} gives us:
\begin{enumerate}
\item $\operatorname{W}(\bar{x},\bar{y},\bar{z})=0$ for every $x,y,z\in \mathfrak{s}_{-\alpha}$;
\item $\operatorname{W}(\bar{x},\bar{y},\overline{m})=0$ for every $x,y\in \mathfrak{s}_{-\alpha}$ and every $m\in \mathfrak{m}$;
\item $\left[X, \operatorname{W}(\bar{x},\overline{m}_{1},\overline{m}_{2})\right]= \operatorname{W}(\left[X,\bar{x}\right],\overline{m}_{1},\overline{m}_{2})$ for every $X\in \mathfrak{s}_{\alpha}$, $x\in \mathfrak{s}_{-\alpha}$ and every $m_{1}, m_{2}\in \mathfrak{m}$.
\end{enumerate}

\noindent Let $x\in \mathfrak{s}_{-\alpha}$, $m_{1}, m_{2}\in \mathfrak{m}$. Then, from Equation \ref{equationdix} we obtain: $$\left[\Theta(x), \operatorname{W}(\bar{x},\overline{m}_{1},\overline{m}_{2})\right]= \operatorname{W}(\left[\Theta(x),\bar{x}\right], \overline{m}_{1}, \overline{m}_{2})=0.$$ In other words  $$\operatorname{W}(\bar{x},\overline{m}_{1},\overline{m}_{2})\in \overline{\Theta(Z_{x})}.$$ 

\noindent Now let $x,y\in \mathfrak{s}_{-\alpha}$, $X\in \mathfrak{s}_{\alpha}$ and $m\in\mathfrak{m}$. Then again Equation \ref{equationdix} gives us: $$\operatorname{W}(\bar{x},\left[X,\bar{y}\right],\overline{m})+\operatorname{W}(\left[X,\bar{x}\right],\bar{y},\overline{m})=0.$$ 

\noindent But $\operatorname{W}(\bar{x},\left[X,\bar{y}\right],\overline{m})\in \overline{\Theta(Z_{x})}$ and $\operatorname{W}(\left[X,\bar{x}\right],\bar{y},\overline{m})\in \overline{\Theta(Z_{y})}$. Thus, Proposition \ref{Proposition99} gives us:
\begin{enumerate}
\item If $y\notin \Theta(Z_{x})$ then $\operatorname{W}(\bar{x},\left[X,\bar{y}\right],\overline{m})=0$;
\item In the case $y\in \Theta(Z_{x})$ and $X\in Z_{x}$, we have $\operatorname{W}(\bar{x},\left[X,\bar{y}\right],\overline{m})=0$ 
\item If $y\in \Theta(Z_{x})$ and $X\notin Z_{x}$. Then because $\Theta(X)\notin \Theta(Z_{x})$ we have: $$\operatorname{W}(\bar{x},\left[X,\bar{y}\right],\overline{m})=\operatorname{W}(\bar{x},\left[\Theta(y),\overline{\Theta(X)}\right],\overline{m})=0.$$
\end{enumerate}
So  as a conclusion we get $\operatorname{W}=0$.
\end{proof}

We finish this section by proving Proposition \ref{proposition89}:

\noindent\textbf{Proof of Proposition \ref{proposition89}}. First note that if $n = 1$  then $\mathfrak{m}= 0$. Thus we assume $n\geq 2$. So far we have seen that $M = \operatorname{SO}(1,n+1) / H$ is 
a conformally flat pseudo-Riemannian manifold of signature $(n,n)$. Since the Lie algebra $\mathfrak{h}$ contains $\mathfrak{a} +\mathfrak{s}_{\alpha} $, the group $H^{°}$ is cocompact in $\operatorname{SO}(1, n+1)$. 
Therefore $\operatorname{SO}^{°}(1, n+1) / H^{°}$ is connected and  compact, with a connected isotropy and hence simply connected. As $M$ is connected, it covers $\operatorname{SO}^{°}(1, n+1) / H^{°}$ and thus equals it. 

On the one hand, the Einstein universe $\operatorname{Ein}^{n, n}$ is simply connected. Thus  $M$ is identified to $\operatorname{Ein}^{n, n}$.  So $\operatorname{SO}(1,n+1) $ acts transitively on $\operatorname{Ein}^{n,n}$ with isotropy $H$. By Montgomery Theorem \cite[Theorem~A]{Montgomery} any maximal compact subgroup in $\operatorname{SO}(1,n+1)$, e.g. $K_{2}=\operatorname{SO}(n+1)$,  acts transitively on $\mathbb{S}^{n} \times \mathbb{S}^{n}$ the two fold cover of $\operatorname{Ein}^{n,n}$.

On the other hand, the  conformal group of $\operatorname{Ein}^{n,n}$ is $\operatorname{SO}(n+1, n+1)$.  A maximal compact subgroup of it is $K_{1}= \operatorname{SO}(n+1) \times \operatorname{SO}(n+1)$. Up to conjugacy , we can assume $K_{2} \subset K_{1}$. Therefore,  $ K_{2} = \operatorname{SO}(n+1)$ acts 
via a homomorphism $\rho = (\rho_{1}, \rho_{2}):  \operatorname{SO}(n+1) \to \operatorname{SO}(n+1) \times \operatorname{SO}(n+1)$. \\
If $\operatorname{SO}(n+1)$ is simple, then: \\
-  either $\rho_{1}$ or $\rho_{2}$ is trivial and the other one is bijective, in which case $\rho (\operatorname{SO}(n+1))$ does not act transitively on $\mathbb{S}^{n} \times \mathbb{S}^{n}$, \\
- or both are bijective, and $\rho (\operatorname{SO}(n+1)$ is up to conjugacy 
in $\operatorname{SO}(n+1) \times \operatorname{SO}(n+1)$ the diagonal $\left\{(g, g)/ g \in \operatorname{SO}(n+1)\right\}$. The latter, too,  does not act transitively on $\mathbb{S}^{n} \times \mathbb{S}^{n}$.

Hence $\operatorname{SO}(n+1)$ must be non-simple which implies $n = 1$ or $n = 3$. but $n = 1$ was excluded, and then remains exactly the case $n = 3$,  for which $M$ is conformally equivalent to $\operatorname{Ein}^{3,3}$.

\subsection{The general case}
In this section we will show Theorem \ref{theoremdeux} in the general case. We suppose that $\mathfrak{g}=\mathfrak{s}_{-\alpha}\oplus\mathfrak{a}\oplus\mathfrak{m}\oplus\mathfrak{s}_{\alpha}\oplus\mathfrak{s}_{c}\oplus \mathfrak{r}_{1}$. Let us denote by $\mathfrak{m}_{0}=\mathfrak{m}\cap \mathfrak{h}$ so that $\mathfrak{so}(1,n+1)\cap \mathfrak{h}= \mathfrak{a}\oplus \mathfrak{s}_{\alpha}\oplus \mathfrak{m}_{0}$. A priori the subalgebra $\mathfrak{m}_{0}$ could be of any dimension in $\mathfrak{m}$. Nevertheless the hypothesis $\mathfrak{m}\not \subset \mathfrak{h}$ restricts drastically the possibilities. So we have:
\begin{prop}
\label{propositionderniere}
The subalgebra $\mathfrak{m}_{0}$ has codimension $n$ in $\mathfrak{m}$.
\end{prop}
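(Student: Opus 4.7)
The plan is to compute $\dim \overline{\mathfrak{m}}$, the dimension of the image of $\mathfrak{m}$ in $\mathfrak{g}/\mathfrak{h}$, by combining the $\mathfrak{a}$-weight decomposition with a short representation-theoretic rigidity argument. First I will clean up the weight structure: since $\mathfrak{g} = \mathfrak{s} \oplus \mathfrak{r}_{1}$ with $[\mathfrak{s}, \mathfrak{r}_{1}] = 0$, the nilradical $\mathfrak{n}$ sits inside $\mathfrak{r}_{1} \subset \mathfrak{g}_{0}$, so $\mathfrak{n}_{\beta} = 0$ for $\beta \neq 0$ and Proposition \ref{propositionsix} reduces to $\mathfrak{h} \supset \mathfrak{a} \oplus \mathfrak{s}_{\alpha}$. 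Under the running hypothesis $\mathfrak{m} \not\subset \mathfrak{h}$, the weight $0$ is non-degenerate, so Corollary \ref{Corolary9} gives $\delta = -\alpha$ and the argument of Proposition \ref{proposition56} still applies to yield $\mathfrak{s}_{-\alpha} \cap \mathfrak{h} = 0$. The $\mathfrak{a}$-weight decomposition then reads
\[
\mathfrak{g}/\mathfrak{h} = \overline{\mathfrak{g}_{0}} \oplus \overline{\mathfrak{s}_{-\alpha}},
\]
with $\dim \overline{\mathfrak{s}_{-\alpha}} = n$, and the non-degenerate pairing between $\overline{\mathfrak{g}_{0}}$ and $\overline{\mathfrak{s}_{-\alpha}}$ forced by $\delta = -\alpha$ gives $\dim \overline{\mathfrak{g}_{0}} = n$ as well.

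Next I will analyse the bilinear form
\[
B : \mathfrak{m} \otimes \mathfrak{s}_{-\alpha} \to \mathbb{R}, \qquad B(X, v) = \langle X, v \rangle.
\]
The inclusion $\mathfrak{m} \subset [\mathfrak{s}_{\alpha}, \mathfrak{s}_{-\alpha}]$ inside $\mathfrak{so}(1, n+1)$, combined with $\mathfrak{s}_{\pm \alpha} \subset \ker \delta \cap \mathfrak{p}$ (as recorded in the Distortion subsection) and the compactness of the subgroup generated by $\mathfrak{m}$, places $\mathfrak{m}$ in $\mathfrak{p}$ with $\delta|_{\mathfrak{m}} = 0$. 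Equation \ref{equationcinq} therefore applies to $w \in \mathfrak{m}$, and for $w, X \in \mathfrak{m}$ and $v \in \mathfrak{s}_{-\alpha}$ it reads
\[
\langle [w, X], v \rangle + \langle X, [w, v] \rangle = 0,
\]
which says precisely that $B$ is $\mathfrak{m}$-equivariant for the adjoint action on $\mathfrak{m}$ and the standard action on $\mathfrak{s}_{-\alpha} \cong \mathbb{R}^{n}$. Viewed as a morphism of $\mathfrak{m} \cong \mathfrak{so}(n)$-modules $B : \mathfrak{m} \to \mathfrak{s}_{-\alpha}^{*}$, it relates the adjoint representation $\Lambda^{2} \mathbb{R}^{n}$ to the standard representation $\mathbb{R}^{n}$ (self-dual via the invariant inner product); these are irreducible and non-isomorphic unless $n = 3$, in which case the Hodge star furnishes the unique equivariant isomorphism up to scalar.

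To conclude I will invoke Schur's lemma. Because $\mathfrak{m} \not\subset \mathfrak{h}$, the image $\overline{\mathfrak{m}}$ is non-zero, and non-degeneracy of the pairing between $\overline{\mathfrak{g}_{0}}$ and $\overline{\mathfrak{s}_{-\alpha}}$ forces $B \neq 0$. Schur's lemma then forces $n = 3$ and $B$ to be an isomorphism, hence non-degenerate as a bilinear form. Since $\mathfrak{m}_{0} = \mathfrak{m} \cap \mathfrak{h}$ is contained in the radical of $\langle \cdot, \cdot \rangle$, it lies in the left-kernel of $B$, which is trivial; therefore $\mathfrak{m}_{0} = 0$ and its codimension in $\mathfrak{m}$ is $\dim \mathfrak{m} = 3 = n$.

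The main obstacle will be rigorously establishing $\mathfrak{m} \subset \mathfrak{p}$, i.e.\ the $\mathfrak{m}$-invariance of the conformal class of $\langle \cdot, \cdot \rangle$: it requires careful use of the distortion character together with the observation that $\mathfrak{m}$ is generated inside $\mathfrak{so}(1, n+1)$ by brackets of elements of $\ker \delta$. Once this input is in place, the remaining representation-theoretic step is a direct application of Schur's lemma to $\mathfrak{so}(n)$.
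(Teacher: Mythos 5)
Your argument is correct and lands on the same conclusion ($n=3$ and $\mathfrak{m}_{0}=0$, hence codimension $3=n$), but by a genuinely different and more uniform route than the paper. The paper argues by contradiction on the codimension and splits into cases according to how $M_{0}$ acts on $\mathfrak{s}_{-\alpha}\cong\mathbb{R}^{n}$: the case $n=2$ is a direct computation with Equation~\ref{equationcinq}; the reducible case forces $\mathfrak{m}_{0}=\mathfrak{so}(E)$ with $\dim E=n-1$ and is excluded by showing that an infinitesimal rotation $x\wedge e$ of the plane $\mathbb{R}e\oplus\mathbb{R}x$ would have to lie in $\mathfrak{so}(E)$; and the irreducible case is reduced to the non-compact semi-simple situation, so that the value $n=3$ is imported from Proposition~\ref{proposition89} (hence from the whole $\operatorname{Ein}^{3,3}$ classification, Montgomery's theorem included). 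You replace all of this by one application of Schur's lemma to the equivariant map $B:\mathfrak{m}\cong\Lambda^{2}\mathbb{R}^{n}\to\mathfrak{s}_{-\alpha}^{*}\cong\mathbb{R}^{n}$, which is nonzero exactly because $\mathfrak{m}\not\subset\mathfrak{h}$ and $\mathfrak{g}_{0}$ pairs only with $\mathfrak{g}_{\delta}=\mathfrak{s}_{-\alpha}$; this absorbs the paper's $n=2$ computation and its wedge argument as special cases, needs no appeal to Proposition~\ref{proposition89}, and gives $\mathfrak{m}_{0}=\ker B=0$ in one stroke. Two points deserve care. First, your statement that the two modules are ``irreducible and non-isomorphic unless $n=3$'' fails literally at $n=4$, where $\Lambda^{2}\mathbb{R}^{4}=\Lambda^{2}_{+}\oplus\Lambda^{2}_{-}$ is reducible; the needed vanishing $\operatorname{Hom}_{\mathfrak{so}(4)}(\Lambda^{2}\mathbb{R}^{4},\mathbb{R}^{4})=0$ still holds because each summand is a $3$-dimensional irreducible while $\mathbb{R}^{4}$ is a $4$-dimensional irreducible, so phrase the key fact as the vanishing of the Hom-space for all $n\neq 3$. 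Second, the equivariance of $B$ rests on $\mathfrak{m}$ acting by infinitesimal isometries of $\left\langle .,.\right\rangle$, which you deduce from $\mathfrak{s}_{-\alpha}\subset\mathfrak{p}$ together with $\delta_{\vert[\mathfrak{p},\mathfrak{p}]}=0$; the inclusion $\mathfrak{s}_{-\alpha}\subset\mathfrak{p}$ is precisely the fact the paper itself invokes without detailed proof (``$\delta$ is trivial on $S_{-}$'', ``recall that $\mathfrak{s}_{-\alpha}$ preserves the metric'') and uses in its own proof of this very proposition and of Proposition~\ref{proposition56}, so you are on the same footing as the paper here --- and you rightly flag it as the one input still requiring a rigorous justification.
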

\begin{proof}
If $n=2$ then $\mathfrak{m}=\mathfrak{so}(2)$. Hence $\left[p,\mathfrak{s}_{\alpha}\right]=\mathfrak{a}\oplus \mathfrak{m}$ for any non null $p\in \mathfrak{s}_{-\alpha} $. Recall that $\mathfrak{s}_{-\alpha} $ preserves the metric so by applying Equation \ref{equationcinq} for $p=v\in \mathfrak{s}_{-\alpha}$, $u\in \mathfrak{s}_{\alpha}$ we get $\left\langle \mathfrak{s}_{-\alpha},\mathfrak{m}\right\rangle=0$. Thus  $\mathfrak{m}\subset \mathfrak{h}$ which contradicts our hypothesis.

Assume that $n\geq 3$ and suppose that  $\mathfrak{m}_{0}$ has codimension less then $n-1$. Denote by $M_{0}$ the connected subgroup of $\operatorname{SO}(n)$ corresponding to $\mathfrak{m}_{0}$. 

If the action of $M_{0}$ on $\mathfrak{s}_{-\alpha}\cong \mathbb{R}^{n}$ is reducible then  $M_{0}$ preserves the splitting $\mathbb{R}^{d}\times \mathbb{R}^{n-d}$ and hence is contained in $\operatorname{SO}(d)\times \operatorname{SO}(n-d)$. Thus $M_{0}$ has codimension bigger than the codimension of $\operatorname{SO}(d)\times \operatorname{SO}(n-d)$ which in turn achieves its minimum if $d=1$ or $n-d=1$ and hence $M_{0}=\operatorname{SO}(n-1)$. One can identify $\mathfrak{m}_{0}$ with $\mathfrak{so}(E)$ for some $n-1$ dimensional linear subspace $E$ of $\mathfrak{s}_{-\alpha}$. Let then $e\in \mathfrak{s}_{-\alpha}$ such that $\mathfrak{s}_{-\alpha}=\mathbb{R}e\oplus E$. Fix a non zero element $x\in \Theta(E)$, we have $\left\langle \operatorname{ad}_{x}(e), X\right\rangle+\left\langle e, \operatorname{ad}_{x}X\right\rangle=0$ for every $X\in \mathfrak{s}_{-\alpha}$ and so in particular $\left\langle e, \operatorname{ad}_{x}e\right\rangle=0$. In addition by Proposition \ref{Proposition57}, $\left[E,\Theta(E)\right]= \mathfrak{a}\oplus \mathfrak{m}_{0}\subset \mathfrak{h}$ thus $\left\langle \operatorname{ad}_{x}e,X\right\rangle=0$ for every $X\in E$ and hence  $\operatorname{ad}_{x}e$ is orthogonal to $\mathfrak{s}_{-\alpha}$. This implies that $x\wedge e\in \mathfrak{h}\cap \mathfrak{m}=\mathfrak{m}_{0}=\mathfrak{so}(E)$ which contradicts the fact that $x\wedge e$ is the infinitesimal rotation of the plane $\mathbb{R}e\oplus \mathbb{R}x$.

The last case to consider is when $M_{0}$ acts irreducibly. Let $m\in \mathfrak{m}_{0}$, $X\in \mathfrak{s}_{-\alpha}$ and $y\in  \mathfrak{s}_{c}\oplus \mathfrak{r}_{1}$  then $\left\langle \operatorname{ad}_{m}(X), y\right\rangle+\left\langle X, \operatorname{ad}_{m}y\right\rangle=0$. But $\operatorname{ad}_{m}y=0$ and hence  $\mathfrak{s}_{c}\oplus \mathfrak{r}_{1}$ is orthogonal to $\left[\mathfrak{m}_{0},\mathfrak{s}_{-\alpha}\right]$ which is equal to $\mathfrak{s}_{-\alpha}$ by irreducibility. Thus $\mathfrak{s}_{c}\oplus \mathfrak{r}_{1}\subset \mathfrak{h}$ and we are in the non-compact semi-simple case. Therefore $n=3$ and $\mathfrak{m}\cong \mathfrak{so}(3)$. Non trivial Sub-algebras of  $\mathfrak{so}(3)$ have dimension one and are reducible. So the only left possibility is $\mathfrak{m}_{0}=\mathfrak{m}\cong \mathfrak{so}(3)$ which show that $\mathfrak{m}\subset \mathfrak{h}$ and this is a contradiction.
\end{proof}
\noindent\textbf{End of Proof of Theorem  \ref{theoremdeux}}.
By Proposition \ref{propositionderniere}, $\mathfrak{m}_{0}$ is of codimension $n$ in $\mathfrak{m}$. But $\mathfrak{s}_{-\alpha}$ is paired with $\mathfrak{g}_{0}=\mathfrak{a}\oplus\mathfrak{m}\oplus\mathfrak{s}_{c}\oplus \left(\mathfrak{r}_{1}\cap \mathfrak{r}_{0}\right)$. Thus $\mathfrak{s}_{c}\oplus \left(\mathfrak{r}_{1}\cap \mathfrak{r}_{0}\right)\subset \mathfrak{h}$ and we are also in the non-compact semi-simple case. Therefore $n=3$ and $M$ is conformally equivalent to $\operatorname{Ein}^{3,3}$.

\cite{*}

\bibliographystyle{Plain}

\bibliography{Pseudo_Conformal_Actions_Mobius_Group}

\end{document}